\newtheorem{thm}{Theorem}
\newtheorem{prop}{Proposition}
\newtheorem{cor}{Corollary}
\theoremstyle{definition}
\newtheorem{defn}{Definition}
\newtheorem{remark}{Remark}
\begin{document}

\title{Computational inference beyond Kingman's coalescent} 

\author{Jere Koskela \\
	\texttt{j.j.koskela@warwick.ac.uk}\\
	\small Mathematics Institute \\
	\small University of Warwick \\
	\small Coventry CV4 7AL \\
	\small UK
	\and
	Paul A. Jenkins \\
	\texttt{p.jenkins@warwick.ac.uk}\\
	\small Department of Statistics \\
	\small University of Warwick \\
	\small Coventry CV4 7AL \\
	\small UK
	\and
	Dario Span\`{o} \\
	\texttt{d.spano@warwick.ac.uk}\\
	\small Department of Statistics \\
	\small University of Warwick \\
	\small Coventry CV4 7AL \\
	\small UK
} 

\date{\today}

\maketitle

\begin{abstract}
Full likelihood inference under Kingman's coalescent is a computationally challenging problem to which importance sampling (IS) and the product of approximate conditionals (PAC) method have been applied successfully.
Both methods can be expressed in terms of families of intractable conditional sampling distributions (CSDs), and rely on principled approximations for accurate inference.
Recently, more general $\Lambda$- and $\Xi$-coalescents have been observed to provide better modelling fits to some genetic data sets.
We derive families of approximate CSDs for finite sites $\Lambda$- and $\Xi$-coalescents, and use them to obtain ``approximately optimal" IS and PAC algorithms for $\Lambda$-coalescents, yielding substantial gains in efficiency over existing methods.
\end{abstract}



\section{Introduction}\label{intro}

Importance sampling (IS) has a well established role in population genetic inference as a means of approximating likelihoods.
In this context the method was introduced by Griffiths and Tavar\'{e}, who derived a recursion for quantities of interest under Kingman's coalescent \cite{Kingman82} and simulated a Markov chain to approximate its solution \cite{Griffiths94a}, \cite{Griffiths94b}, \cite{Griffiths94c}, \cite{Griffiths99}.
Their approach was identified as importance sampling by Felsenstein et al. \cite{Felsenstein99}, which led Stephens and Donnelly to derive the optimal proposal distribution in terms of a family of conditional sampling distributions (CSDs) \cite{Stephens00}.
The CSDs are inaccessible in general, but the authors introduced an approximation which yielded dramatic improvements in efficiency and accuracy of IS algorithms.
In addition, IS has been investigated and applied to genetic problems such as demographic and other parameter inferences in \cite{Griffiths96}, \cite{Fearnhead01}, \cite{DeIorio05}, \cite{Griffiths08}, \cite{Gorur08}, \cite{Hobolth08} and \cite{Jenkins11}.

Approximating the CSDs for various generalisations of Kingman's coalescent has received plenty of attention, both as a means of deriving approximations to the optimal importance sampling algorithm and due to the product of approximate conditionals (PAC) method introduced in \cite{Li03}.
De Iorio and Griffiths derived an approximation to finite alleles CSDs based on the Fleming-Viot generator \cite{DeIorio04a}, \cite{DeIorio04b} and Paul and Song provided a genealogical interpretation and included crossover recombination \cite{Paul10}.
Further approximations based on Hidden Markov Models have been obtained in \cite{Paul11} and \cite{Steinrucken13b}, and applied in \cite{Sheehan13}.

Kingman's coalescent only permits binary mergers of ancestral lineages.
The $\Lambda$-coalescents, introduced by Pitman \cite{Pitman99} and Sagitov \cite{Sagitov99}, generalise Kingman's coalescent by permitting multiple lineages to merge in one event.
The rate of coalescence of any $k$ out of $n$ lineages is given by
\begin{equation*}
\lambda_{n,k} := \int_0^1 r^k ( 1 - r )^{ n - k } \frac{ 1 }{ r^2 } \Lambda( dr )
\end{equation*}
for some finite measure $\Lambda$ on $[0,1]$, which can be taken to be a probability measure without loss of generality.
Popular choices of $\Lambda$ include $\Lambda = \delta_0$, which corresponds to Kingman's coalescent, $\Lambda = \delta_1$ leading to star-shaped genealogies, $\Lambda = \frac{ 2 }{ 2+ \psi^2 } \delta_0 + \frac{ \psi^2 }{ 2 + \psi^2 } \delta_{ \psi }$ where $\psi \in ( 0, 1 ]$ \cite{Eldon06} and $\Lambda = \operatorname{Beta}( 2 - \alpha, \alpha )$ where $\alpha \in ( 1, 2 )$ \cite{Schweinsberg03}.
See \cite{Birkner09b} for a review.

Investigations by  Boom et al. \cite{Boom94},  \'{A}rnason \cite{Arnason04}, Eldon and Wakeley \cite{Eldon06}, and Birkner and Blath \cite{Birkner08} have concluded that $\Lambda$-coalescents can provide better descriptions of some populations than Kingman's coalescent, particularly among marine species.
Thus, similar strategies of inference have been developed for them.
An analogue of the Griffiths-Tavar\'{e} recursion (see equation \eqref{sequential_likelihood} in Section \ref{optimal_proposal}) for $\Lambda$-coalescents was derived by Birkner and Blath in \cite{Birkner08}.
In a subsequent paper \cite{Birkner11} the authors characterised the optimal IS proposal distribution in terms of a family of Green's functions related to the time-reversal of the $\Lambda$-coalescent, and used their representation to obtain an approximately optimal algorithm for the infinite sites model of mutation.
\cite{Steinrucken13a} contains a detailed discussion of inference under Beta-coalescents and their applicability to marine populations.

The $\Lambda$-coalescent family allows for multiple mergers, but only permits one merger at a time.
They are generalised further by the $\Xi$-coalescents, which permit any number of simultaneous, multiple mergers.
$\Xi$-coalescents were introduced by M\"{o}hle and Sagitov \cite{Moehle01} and Schweinsberg \cite{Schweinsberg00} and can be expressed in terms of a finite measure $\Xi$ (again,  a probability measure without loss of generality) on the infinite simplex
\begin{equation*}
\Delta = \left\{ \mathbf{ r } = ( r_1, r_2, \ldots ) \in [ 0, 1 ]^{ \mathbb{ N } } : \sum_{ i = 1 }^{ \infty } r_i \leq 1 \right\}.
\end{equation*}
We denote by $\lambda_{ n; k_1, \ldots, k_p; s }$ the rate of jumps involving $p \geq 1$ mergers with sizes $k_1, \ldots, k_p$, with $s = n - \sum_{ i = 1 }^p k_i$ lineages not participating in any merger. 
The total number of lineages before the mergers is denoted by $n$.
This rate is given as
\begin{equation*}
\lambda_{ n; k_1, \ldots, k_p; s } = \int_{ \Delta } \sum_{ l = 0 }^s \binom{s}{l} \sum_{ i_1 \in \mathbb{ N } } \ldots \sum_{ i_{ p + l } \in \mathbb{ N } } r_{ i_1 }^{ k_1 } \ldots r_{ i_p }^{ k_p } r_{ i_{ p+1 } } \ldots r_{ i_{ p + l } } \frac{ \left( 1 - \sum_{ i = 1 }^{ \infty } r_i \right)^{ s - l } }{ \sum_{ i = 1 }^{ \infty } r_i^2 } \Xi( d\mathbf{ r } ).
\end{equation*}
$\Xi$-coalescents have also been used to model genealogies of marine organisms \cite{Sargsyan08} and populations undergoing mass extinctions \cite{Taylor09}, although the question of which measures $\Xi$ are biologically relevant remains open.
Note that if $\Xi$ assigns full mass to the set $\{ \mathbf{ r } \in \Delta : r_2 = r_3 = \ldots = 0 \}$ the resulting process is a $\Lambda$-coalescent.

In this paper we characterise the optimal IS proposal distribution for finite sites, finite alleles $\Lambda$- and $\Xi$-coalescents in terms of respective families of CSDs, and derive principled approximations to the CSDs for both coalescent families.
The rest of the paper is laid out as follows.
In Section \ref{optimal_proposal} we give a heuristic description of $\Lambda$-coalescents and derive their optimal proposal distributions.
In Section \ref{csds} we give principled derivations for approximate $\Lambda$-coalescent CSDs, based on the finite alleles $\Lambda$-Fleming-Viot generator and on genealogical considerations.
Section \ref{simulation} presents simulation studies using both IS and PAC algorithms on data sets simulated from a finite sites $\Lambda$-coalescent.
In Section \ref{xi_coalescents} we generalise the IS algorithm for the $\Xi$-coalescent family, and derive the analogous approximate CSDs.
Section \ref{discussion} concludes with a discussion.

\section{The \texorpdfstring{$\Lambda$}{Lambda}-coalescent and its optimal proposal distributions}\label{optimal_proposal}

In the notation of \cite{Paul10}, let $L = \{ 1, \ldots, | L | \}$ be a set of loci, $E_l$ be the finite set of alleles at locus $l \in L$, $\theta_l$ be the mutation rate at locus $l \in L$ and $P^{ ( l ) }$ be a family of stochastic matrices giving transition probabilities of mutations at locus $l \in L$.
Let $\theta := \sum_{ l \in L } \theta_l$ be the total mutation rate, $\mathcal{ H } := E_1 \times \ldots \times E_{ | L | }$ be the set of haplotypes and
\begin{equation*}
\Delta_{ \mathcal{ H } } = \left\{ \mathbf{ x } = ( x_1, \ldots, x_{ | \mathcal{ H } | } ) \in [ 0, 1 ]^{ | \mathcal{ H } | } : \sum_{ h \in \mathcal{ H } } x_h = 1 \right\}
\end{equation*}
be the space of probability vectors of allele frequencies. 
Denote a sample by $\mathbf{ n } = ( n_h )_{ h \in \mathcal{ H } } \in \mathbb{ N }^{ | \mathcal{ H } | }$ and let $n := \sum_{ h \in \mathcal{ H } } n_h$.
Let $\mathbf{ e }_h$ be the canonical unit vector with a 1 in position $h$ and zeros elsewhere.
For $l \in L$ and $h \in \mathcal{ H }$ let $h[ l ]$ denote the allele at locus $l$ of haplotype $h$.
Finally, for $a \in E_l$ let $S_l^a( h )$ be the haplotype obtained from $h$ by overwriting locus $l$ by allele $a$.

The dynamics of a finite sample of individuals from the stationary $\Lambda$-Fleming-Viot process under the finite alleles model of mutation can be described as follows.
For a rigorous account see \cite{Donnelly99}:

Consider a sample of $n$ typed lineages and associate to each lineage a unique level from $\{ 1, \ldots , n \}$.
Let $\Pi$ be a Poisson process on $\mathbb{ R }_+ \times [ 0, 1 ]$ with rate $dt \otimes r^{-2} \Lambda( dr )$.
At each $( t, r ) \in \Pi$ every lineage flips a coin with success probability $r$, and all successful lineages ``look down" and copy the type of the participating lineage with the lowest level.
Independently, the type of each lineage mutates at locus $l$ with rate $\theta_l$ and jumps drawn from $P^{ ( l ) }$.
This particle system embeds the $\Lambda$-coalescent, with coalescence events traced along the look-down-and-copy jumps.

Let $( H_i )_{ i = 0 }^{- T }$ denote the sequence of type configurations of the ancestral sample after $i$ events, whether they be mutations or coalescences.
$H_0$ contains the frequency counts of the observed sample, $H_{ -T }$ is the type of the most recent common ancestor (MRCA) and the other $H_i$'s correspond to intermediate states along the ancestral tree.
Note that the likelihood $\mathbb{ P }( H_0 )$ can be written as
\begin{equation*}
\mathbb{ P }( H_0 ) = \sum_{ \mathcal{ A } } \mathbb{ P }( H_0 | A ) \mathbb{ P }( A ),
\end{equation*}
where $\mathcal{ A }$ is the space of possible ancestries and $\mathbb{ P }( H_0 | A ) = 1$ if the leaves of $A$ are compatible with $H_0$, and 0 otherwise.
These ancestries can be decomposed into a sequence of updates as above to give
\begin{equation}\label{sequential_likelihood}
\mathbb{ P }( H_0 ) = \sum_{ H_0 } \sum_{H_{-1}} \ldots \sum_{H_{-T}} \mathbb{ P }( H_0 | A ) \prod_{ i = -1 }^{ -T }\mathbb{ P }( H_{ i + 1 } | H_i ) \mathbb{ P }( H_{ -T } ) 
\end{equation}
where $\mathbb{ P }( H_{ -T } )$ is the invariant distribution of the mutation operator obtained by viewing mutation as a mixture Markov chain on $\mathcal{ H }$ with weights $\{ \theta_l / \theta \}_{l \in L}$ and mixture components $\{ P^{(l)} \}_{l \in L}$, and
\begin{equation}\label{forwards_probs}
\mathbb{ P }( H_{ i + 1 } | H_i ) =
\begin{cases}
\frac{ \theta_l }{ n \theta - q_{ n_i n_i } }( ( n_i )_{ S_l^a( h ) } + 1 - \delta_{ a h[ l ] } ) P^{ ( l ) }_{ a h[ l ] }  &\text{ if } H_{ i + 1 } = H_i - \mathbf{ e }_{ S_l^a( h ) } + \mathbf{ e }_h \\
\binom{ n_i }{ k } \frac{ \lambda_{ n_i,  k } }{ n \theta - q_{ n_i n_i } } \frac{ ( n_i )_h - k + 1 }{ n_i - k + 1 } &\text{ if } H_{ i + 1 } = H_i + ( k - 1 ) \mathbf{ e }_h
\end{cases} 
\end{equation}
where $- q_{ n n } = \sum_{ j = 1 }^{ n - 1 } \binom{ n }{ n - j + 1 } \lambda_{ n, n - j + 1 }$ is the total rate of coalescence of $n$ untyped lineages, $n_i$ is the number of lineages in $H_i$ and $( n_i )_h$ is the number of lineages of type $h$ in $H_i$. See \cite{Birkner08} for a detailed derivation.

As with Kingman's coalescent, \eqref{sequential_likelihood} can be approximated by sampling $N$ independent ancestors from the stationary distribution of the mutation mechanism, generating an ancestral tree $A$ from each ancestor and counting the proportion of trees with leaves that are compatible with $H_0$.
However, obtaining a nonzero estimator with reasonable probability requires a prohibitively large number of simulations as likelihoods of $\mathcal{ O }( 10^{ -10 } )$, or much smaller still, are typical even among modest data sets.
A better approach is to start with the data, propose mutations and coalescences backwards in time until the MRCA is reached, and thus ensure every simulated tree is compatible with the observed leaves.
This yields
\begin{align}
\mathbb{ P }( H_0 ) &= \sum_{H_{-T}} \ldots \sum_{H_{-1}} \prod_{ i = -1 }^{ -T }\frac{ \mathbb{ P }( H_{ i + 1 } | H_i ) }{ \mathbb{ Q }( H_i | H_{ i +1 } ) } \mathbb{ P }( H_{ -T } ) \mathbb{ Q }( H_i | H_{ i + 1 } ) \nonumber \\
&= \mathbb{ E }\left[ \prod_{ i = -1 }^{ -T } \frac{ \mathbb{ P }( H_{ i + 1 }  | H_ i ) }{ \mathbb{ Q }( H_i | H_{ i + 1 } ) } \mathbb{ P }( H_{ - T } )\right] \label{backwards_proposal}
\end{align}
where $\mathbb{ Q }( \cdot | H_{ i + 1 } )$ is an arbitrary proposal distribution satisfying mild support conditions, and the expectation is with respect to $\otimes_{ i = - 1 }^{ - T } \mathbb{ Q }( H_i | H_{ i + 1 } )$.
The expectation in \eqref{backwards_proposal} can be approximated by the IS estimator
\begin{equation*}
\widehat{ p }( H_0 ) = \frac{ 1 }{ N }\sum_{ j = 1 }^N \prod_{ i = -1 }^{ - T_j } \frac{ \mathbb{ P }( H_{ i + 1 }^{ ( j ) } | H_i^{ ( j ) } ) }{ \mathbb{ Q }( H_i^{ ( j ) } | H_{ i + 1 }^{ ( j ) } ) } \mathbb{ P }( H_{ - T_j }^{ ( j ) } ), 
\end{equation*}
where $\left\{ \{ H_i^{ ( j ) } \}_{ i = 0 }^{ - T_j } \right\}_{j = 1}^N$ is an i.i.d.~sample of sequentially constructed coalescent trees from the distributions $\mathbb{ Q }( H_i | H_{ i + 1 } )$.

The following theorem is a $\Lambda$-coalescent analogue of Theorem 1 of \cite{Stephens00}.
A similar result, using ratios of Greens functions instead of CSDs, is presented in Lemma 2.2 of \cite{Birkner11}.
\vskip 11pt
\begin{thm}\label{lambda_optimal_proposal}
Let $\pi( \mathbf{ m } | \mathbf{ n } )$ denote the sampling distribution of the next $m$ individuals given the types of the first $n$ from a population evolving according to the stationary $\Lambda$-Fleming-Viot process.
Then the optimal proposal distributions $\mathbb{ Q  }^*$ are given by
\begin{equation*}
\mathbb{ Q }^*( H_i | H_{ i + 1 } ) \propto \begin{cases}
( n_{ i + 1 } )_h \theta_l \frac{ \pi( \mathbf{ e }_{ S_l^a( h ) } | H_{ i + 1 } - \mathbf{ e }_h ) }{ \pi( \mathbf{ e }_h | H_{ i + 1 } - \mathbf{ e }_h ) } P^{ ( l ) }_{ a h[ l ] } &\text{if } H_i = H_{ i + 1 } - \mathbf{ e }_h + \mathbf{ e }_{ S_l^a( h ) } \\
\frac{ \binom{ ( n_{ i + 1 } )_{ h } }{ k } \lambda_{ n_{ i + 1 }, k } }{ \pi( ( k -1 ) \mathbf{ e }_h | H_{ i + 1 } - ( k - 1 ) \mathbf{ e }_h ) } &\text{if } H_i = H_{ i + 1 } - ( k - 1 ) \mathbf{ e }_h
\end{cases}
\end{equation*}
where the first term ranges over all possible mutations for all haplotypes present in the sample, and the second over all present haplotypes and $k \in \{ 2, \ldots ( n_{ i + 1 } )_h \}$.
\end{thm}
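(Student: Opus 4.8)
The importance weight of a sampled tree is $W=\mathbb{P}(H_{-T})\prod_{i=-1}^{-T}\mathbb{P}(H_{i+1}|H_i)/\mathbb{Q}(H_i|H_{i+1})$, and by \eqref{backwards_proposal} it satisfies $\mathbb{E}[W]=\mathbb{P}(H_0)$ under any proposal $\mathbb{Q}$ obeying the support condition; since $\operatorname{Var}(\widehat{p}(H_0))=N^{-1}\operatorname{Var}(W)$, a proposal is optimal precisely when it makes $W$ almost surely equal to the constant $\mathbb{P}(H_0)$. The plan is to exhibit such a $\mathbb{Q}^*$ built from the sampling probabilities, verify it is a legitimate proposal whose weights telescope, and then recast it in terms of the CSDs.

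\textbf{The candidate.} Let $\pi(\mathbf{m})$ denote the probability that an unordered sample of size $|\mathbf{m}|$ from the stationary $\Lambda$-Fleming--Viot process has configuration $\mathbf{m}$; this is the likelihood $\mathbb{P}(\mathbf{m})$, and iterating \eqref{sequential_likelihood} shows $\pi$ solves the recursion $\pi(\mathbf{n})=\sum_{\mathbf{m}}\mathbb{P}(\mathbf{n}|\mathbf{m})\pi(\mathbf{m})$ with value at singletons equal to the stationary distribution of the mutation chain. I would take
\begin{equation*}
\mathbb{Q}^*(H_i|H_{i+1})=\frac{\mathbb{P}(H_{i+1}|H_i)\,\pi(H_i)}{\pi(H_{i+1})},
\end{equation*}
which is $\mathbb{P}(H_i|H_{i+1})$, i.e.\ Bayes' rule applied to $\mathbb{P}(H_{i+1}|H_i)$ with prior $\pi$ --- the natural zero-variance candidate. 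Since $\pi(H_{i+1})$ does not depend on $H_i$, equivalently $\mathbb{Q}^*(H_i|H_{i+1})\propto\mathbb{P}(H_{i+1}|H_i)\pi(H_i)$, with an $H_i$-independent proportionality constant.

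\textbf{Verification.} That $\sum_{H_i}\mathbb{Q}^*(H_i|H_{i+1})=1$ is exactly the recursion for $\pi$, and the support condition holds because $\mathbb{Q}^*(H_i|H_{i+1})>0$ iff $\mathbb{P}(H_{i+1}|H_i)>0$. Along any tree $(H_i)_{i=0}^{-T}$ the weight telescopes:
\begin{align*}
W&=\mathbb{P}(H_{-T})\prod_{i=-1}^{-T}\frac{\mathbb{P}(H_{i+1}|H_i)}{\mathbb{Q}^*(H_i|H_{i+1})}=\mathbb{P}(H_{-T})\prod_{i=-1}^{-T}\frac{\pi(H_{i+1})}{\pi(H_i)}\\
&=\mathbb{P}(H_{-T})\,\frac{\pi(H_0)}{\pi(H_{-T})}=\mathbb{P}(H_0),
\end{align*}
using $\pi(H_{-T})=\mathbb{P}(H_{-T})$ since $H_{-T}$ is a single lineage, and $\pi(H_0)=\mathbb{P}(H_0)$. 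Hence $W$ is almost surely constant, $\operatorname{Var}(W)=0$, and $\mathbb{Q}^*$ is optimal.

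\textbf{Recasting via CSDs, and the main obstacle.} It remains to substitute the explicit transitions \eqref{forwards_probs} into $\mathbb{Q}^*(H_i|H_{i+1})\propto\mathbb{P}(H_{i+1}|H_i)\pi(H_i)$, absorb the $H_i$-independent normaliser appearing in \eqref{forwards_probs} into the proportionality constant, and express $\pi(H_i)/\pi(H_{i+1})$ through CSDs using the defining property of $\pi(\cdot|\cdot)$: appending individuals to a sample multiplies its \emph{ordered} probability by the corresponding CSD value, so the sampling probabilities of configurations differing by a few unit increments are related by a product of CSD values and an explicit ratio of multinomial coefficients $|\mathbf{m}|!/\prod_h(\mathbf{m})_h!$. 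In the mutation case, comparing $\pi(H_i)$ with $\pi(H_{i+1})$ through their common restriction $H_{i+1}-\mathbf{e}_h$ produces the ratio $\pi(\mathbf{e}_{S_l^a(h)}|H_{i+1}-\mathbf{e}_h)/\pi(\mathbf{e}_h|H_{i+1}-\mathbf{e}_h)$ whose multinomial prefactor cancels the explicit count carried by \eqref{forwards_probs}, leaving the coefficient $(n_{i+1})_h\theta_l P^{(l)}_{ah[l]}$ of the statement; in the coalescence case a direct comparison produces $\pi((k-1)\mathbf{e}_h|H_{i+1}-(k-1)\mathbf{e}_h)^{-1}$ together with a multinomial ratio which, combined with an elementary binomial identity applied to the prefactor of \eqref{forwards_probs}, collapses it to $\binom{(n_{i+1})_h}{k}\lambda_{n_{i+1},k}$. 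The conceptual content ends at the verification; the real labour --- and the natural place for index errors --- is this last bookkeeping, recognising ratios of sampling probabilities of \emph{different} sample sizes as ratios of CSDs while correctly tracking the multinomial coefficients introduced by the passage between ordered and unordered samples.
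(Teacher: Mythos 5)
Your argument is correct in substance but, for the coalescence term, reaches the result by a genuinely different route from the paper. The paper dispatches the mutation case by citing Theorem 1 of \cite{Stephens00} and proves the coalescence case \emph{locally} on the ordered lookdown particle system: it computes the probability that the most recent event backward in time was a merger of a fixed set of $k$ level-labelled lineages of type $h$ given the current ordered configuration, and the CSD $\pi( ( k - 1 ) \mathbf{ e }_h | \cdot )$ appears immediately as a ratio of ordered sampling probabilities once the erased types $g_{ 2 : k }$ are summed out; exchangeability then supplies the factor $\binom{ n_h }{ k }$. You instead argue \emph{globally} on the unordered jump chain: you identify the zero-variance proposal as the Bayes reversal $\mathbb{ Q }^*( H_i | H_{ i + 1 } ) \propto \mathbb{ P }( H_{ i + 1 } | H_i ) \pi( H_i )$, verify normalisation via the one-step peeling of \eqref{sequential_likelihood} (the Birkner--Blath recursion), and check that the weights telescope to $\mathbb{ P }( H_0 )$. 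This buys a self-contained optimality proof that covers both transition types at once, at the price of having to convert ratios of \emph{unordered} sampling probabilities into CSDs by hand; the paper's ordered computation avoids all multinomial bookkeeping because the CSD is by definition a ratio of ordered probabilities.

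The one place you stop short is precisely that bookkeeping, and since the displayed formulas are the content of the theorem you should carry it out rather than gesture at it. It does go through. Writing $\pi^0( \mathbf{ n } ) = \binom{ n }{ n_1, \ldots, n_{ | \mathcal{ H } | } } \mathbb{ E }[ q( \mathbf{ n } | \mathbf{ X } ) ]$ for the unordered sampling probability, the coalescence move $H_i = \mathbf{ n } - ( k - 1 ) \mathbf{ e }_h$ gives $\pi^0( H_i ) / \pi^0( H_{ i + 1 } ) = \frac{ ( n - k + 1 )! \, n_h! }{ n! \, ( n_h - k + 1 )! } \, \pi( ( k - 1 ) \mathbf{ e }_h | H_i )^{ - 1 }$, and multiplying by the prefactor $\binom{ n }{ k } \frac{ n_h - k + 1 }{ n - k + 1 }$ of \eqref{forwards_probs} collapses everything to $\binom{ n_h }{ k }$; in the mutation case the multinomial ratio contributes $n_h / ( n_{ S_l^a( h ) } + 1 )$, which cancels the count $n_{ S_l^a( h ) } + 1 - \delta_{ a h[ l ] }$ and leaves $n_h \theta_l P^{ ( l ) }_{ a h[ l ] }$ times the stated CSD ratio. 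Two caveats: this cancellation requires reading the counts $( n_i )_{ \cdot }$ and the rate index in \eqref{forwards_probs} as referring to the later, larger configuration (consistent with \eqref{xi_recursion} and with the appearance of $\lambda_{ n_{ i + 1 }, k }$ rather than $\lambda_{ n_i, k }$ in the theorem), so state that convention explicitly; and your identity $\pi( \mathbf{ n } ) = \sum_{ \mathbf{ m } } \mathbb{ P }( \mathbf{ n } | \mathbf{ m } ) \pi( \mathbf{ m } )$ is not proved by ``iterating'' \eqref{sequential_likelihood} --- it \emph{is} the recursion of which \eqref{sequential_likelihood} is the iteration --- so cite it as the sampling recursion of \cite{Birkner08} rather than deriving it circularly.
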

\begin{proof}
The argument giving the mutation term is identical to that in Theorem 1 of \cite{Stephens00} and is omitted.

For the coalescence term suppose the $n$ lineages evolve according to the lookdown construction of \cite{Donnelly99}, and denote the types of the $n$ particles at time $t$ by $D_n( t ) = ( h_1, \ldots, h_n )$.
Define $\Upsilon_k$ as the event that  in the last $\delta$ units of time there was a merger involving lineages $n - k + 1, n - k + 2, \ldots, n$.
To simplify the presentation let $h_{ i : j } := ( h_i, h_{ i + 1 }, \ldots, h_{ j - 1 }, h_j )$.
Then
\begin{align*}
\mathbb{ P }&( \Upsilon_k | D_n( t ) = ( h_{ 1 : n - k }, h, \ldots, h ) ) \\
&= \displaystyle \sum_{ g_{ 2 : k } \in \mathcal{ H }^{ k - 1 } } \frac{ \mathbb{ P }( \Upsilon_k \cap D_n( t - \delta ) = ( h_{ 1 : n - k }, h, g_{ 2 : k } ) \cap D_n( t ) = ( h_{ 1 : n - k }, h, \ldots, h ) ) }{ \mathbb{ P }( D_n( t ) = ( h_{ 1 : n - k }, h, \ldots, h )  ) } \\
&= \sum_{ g_{ 2 : k } \in \mathcal{ H }^{ k - 1 } } \frac{ \mathbb{ P }( D_n( t - \delta ) = ( h_{ 1 : n - k }, h, g_{ 2 : k } ) ) \delta \lambda_{ n, k }  }{ \mathbb{ P }( D_n( t ) = ( h_{ 1 : n - k }, h, \ldots, h )  ) } + o( \delta ) \\ 
&= \frac{ \delta \lambda_{ n, k } }{ \pi( ( k - 1 ) \mathbf{ e }_h | D_n( t ) - ( k - 1 ) \mathbf{ e }_h ) } + o( \delta ).
\end{align*}
By exchangeability every set of $k$ lineages coalesces at this same rate, so the total rate is obtained by multiplying by $\binom{ n_h }{ k }$.
\end{proof}
\begin{remark}\label{univariate_remark}
It is tempting to simplify the situation further by decomposing
\begin{equation*}
\pi( ( k - 1 ) \mathbf{ e }_h | \mathbf{ n } - ( k - 1 ) \mathbf{ e }_h ) = \prod_{ j = 0 }^{ k - 2 } \pi( \mathbf{ e }_h | \mathbf{ n } - ( k - 1 + j ) \mathbf{ e }_h )
\end{equation*}
and thus requiring only univariate CSDs.
In general a decomposition like this requires exchangeability, which the CSDs satisfy but typically approximations do not.
However, in the $\Lambda$-coalescent setting the argument being decomposed will always consist of only one type of allele.
Permuting lineages which feature only in the sample being conditioned on does not affect the outcome even for non-exchangeable families of distributions, so in this particular context univariate CSDs are sufficient.
Note that this will not be the case for $\Xi$-coalescents since simultaneous mergers of several types of lineages is permitted.
\end{remark}

\section{Approximating the \texorpdfstring{$\Lambda$}{Lambda}-coalescent CSDs}\label{csds}

An approximation to the CSDs for Kingman's coalescent was derived in \cite{DeIorio04a} by noting that the Fleming-Viot generator can be written component-wise as $\mathcal{ L } = \sum_{ h \in \mathcal{ H } } \mathcal{ L }_h$, then assuming that there exists a probability measure and an expectation $\widehat{ \mathbb{ E } }$ with respect to that measure, such that the standard stationarity condition $\mathbb{ E }\left[ \mathcal{ L } f( \mathbf{ X } ) \right ] = 0$ holds component-wise: 
\begin{equation*}
\widehat{ \mathbb{ E } } \left[ \mathcal{ L }_h f( \mathbf{ X } ) \right] = 0 \text{ for every } h \in \mathcal{ H } \text{ and  } f \in C^2( \Delta_{ \mathcal{ H } } ).
\end{equation*}
Substituting the probability of an ordered sample $q( \mathbf{ n } | \mathbf{ x } ) = \prod_{ h \in \mathcal{ H } } x_h^{ n_ h }$ yields a recursion whose solution is defined as the approximate CSD.
The same argument can be applied to the $\Lambda$-Fleming-Viot process to define approximate CSDs for the $\Lambda$-coalescent.
\vskip 11pt
\begin{thm}
Let $\hat{ \pi }( \mathbf{ m } | \mathbf{ n } )$ denote the approximate $\Lambda$-coalescent CSD as defined above.
It solves the following recursion
\begin{align}
m& \left[ \frac{ \Lambda( \{ 0 \} ) ( n + m - 1 ) }{ 2 } + \theta + \frac{ 1 }{ n + m } \sum_{ k = 2 }^{ n + m } \binom{ n + m }{ k } \lambda_{ n + m, k } \right] \hat{ \pi }( \mathbf{ m } | \mathbf{ n } ) \nonumber \\
=& \sum_{ h \in \mathcal{ H } } m_h \Bigg[ \frac{ \Lambda( \{ 0 \} ) ( n_h + m_h - 1 ) }{ 2 } \hat{ \pi }( \mathbf{ m } - \mathbf{ e }_h | \mathbf{ n } ) +  \sum_{ l \in L } \theta_l  \sum_{ a \in E_l } P_{ a h[ l ] }^{ ( l ) } \hat{ \pi }( \mathbf{ m } - \mathbf{ e }_h + \mathbf{ e }_{ S_l^a( h ) } | \mathbf{ n } ) \nonumber \\
&+ \frac{ 1 }{ n_h + m_h } \Bigg\{ \sum_{ k = 2 }^{ m_h + 1 } \binom{ n_h + m_h }{ k } \lambda_{ n + m, k } \hat{ \pi }( \mathbf{ m } - ( k - 1 ) \mathbf{ e }_h | \mathbf{ n } ) \nonumber \\ 
&+ \sum_{ k = m_h + 2 }^{ n_h + m_h } \binom{ n_h + m_h }{ k } \lambda_{ n + m, k } \frac{ \hat{ \pi }( \mathbf{ m } - m_h \mathbf{ e }_h | \mathbf{ n } - ( k - m_h - 1 ) \mathbf{ e }_h ) }{ \hat{ \pi }( ( k - m_h - 1 ) \mathbf{ e }_h | \mathbf{ n } - ( k - m_h - 1 ) \mathbf{ e }_h ) } \Bigg\} \Bigg]. \label{a_csd}
\end{align}
\end{thm}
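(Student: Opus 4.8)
The plan is to mimic the De Iorio--Griffiths derivation recalled just above, the only genuinely new ingredient being the jump part of the $\Lambda$-Fleming--Viot generator. First I would write this generator explicitly on $C^2(\Delta_{\mathcal H})$ as $\mathcal L = \mathcal L_{\mathrm{diff}} + \mathcal L_{\mathrm{mut}} + \mathcal L_{\mathrm{jump}}$, where $\mathcal L_{\mathrm{diff}}$ is the Wright--Fisher diffusion operator carrying the constant $\Lambda(\{0\})$, $\mathcal L_{\mathrm{mut}}$ the finite--alleles mutation operator built from the $\theta_l$ and $P^{(l)}$, and $\mathcal L_{\mathrm{jump}} f(\mathbf x) = \int_{(0,1]} \sum_{h\in\mathcal H} x_h [ f((1-r)\mathbf x + r\mathbf e_h) - f(\mathbf x) ] r^{-2}\Lambda(dr)$, the measure--valued generator embedding the look--down--and--copy dynamics. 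I would then fix a component decomposition $\mathcal L = \sum_{h\in\mathcal H}\mathcal L_h$ in which $\mathcal L_h$ contains the natural $h$--pieces of $\mathcal L_{\mathrm{diff}}$ and $\mathcal L_{\mathrm{mut}}$, while the jump piece is written with a martingale correction, $\mathcal L_{h,\mathrm{jump}} f(\mathbf x) = \int_{(0,1]} x_h [ f((1-r)\mathbf x + r\mathbf e_h) - f(\mathbf x) - r(\mathbf e_h - \mathbf x)\cdot\nabla f(\mathbf x) ] r^{-2}\Lambda(dr)$: the integrand is $O(r^2)$ near $0$, so each component is individually finite, while $\sum_h x_h(\mathbf e_h - \mathbf x) = 0$ makes the corrections cancel on summing over $h$.

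Next I would impose the component--wise stationarity ansatz $\widehat{\mathbb E}[\mathcal L_h f(\mathbf X)] = 0$ for every $h$, with the ordered--sample test function $f(\mathbf x) = \prod_{h'\in\mathcal H} x_{h'}^{n_{h'}+m_{h'}}$, and write $\hat q(\mathbf a) := \widehat{\mathbb E}[\prod_{h'} X_{h'}^{a_{h'}}]$ and $\hat\pi(\mathbf m\mid\mathbf n) := \hat q(\mathbf n + \mathbf m)/\hat q(\mathbf n)$. The diffusion and mutation pieces are computed exactly as in the Kingman case: differentiating the monomial pulls down factors $n_h + m_h$ and produces the $\hat q$--values $\hat q(\mathbf n + \mathbf m - \mathbf e_h)$ and $\hat q(\mathbf n + \mathbf m - \mathbf e_h + \mathbf e_{S_l^a(h)})$, which upon division by $\hat q(\mathbf n)$ give the $\Lambda(\{0\})$-- and $\theta_l$--terms of \eqref{a_csd}. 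For the jump piece I would substitute $f$, expand $((1-r)x_h + r)^{n_h+m_h} = \sum_{k=0}^{n_h+m_h}\binom{n_h+m_h}{k} x_h^{n_h+m_h-k}(1-r)^{n_h+m_h-k} r^k$, multiply through by the remaining factors (in particular the prefactor $(1-r)^{(n+m)-(n_h+m_h)}$ from the haplotypes $h'\ne h$), and integrate term by term against $r^{-2}\Lambda(dr)$; matching powers shows the degree--$k$ contribution has integral $\int_0^1 r^{k-2}(1-r)^{(n+m)-k}\Lambda(dr) = \lambda_{n+m,k}$, so the $k\ge 2$ terms contribute $\binom{n_h+m_h}{k}\lambda_{n+m,k}\hat q(\mathbf n + \mathbf m - (k-1)\mathbf e_h)$. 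The $k\in\{0,1\}$ contributions, which would individually diverge, are cancelled against the Taylor expansion of the correction term, and the leftover diagonal collapses onto the coefficient $\tfrac{1}{n+m}\sum_{k\ge 2}\binom{n+m}{k}\lambda_{n+m,k}$ via the identity $\sum_{k=2}^{N}\binom{N}{k}\lambda_{N,k} = \int_0^1 \frac{1 - (1-r)^N - Nr(1-r)^{N-1}}{r^2}\Lambda(dr)$.

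It then remains to re--express the surviving $\hat q$--values as ratios of approximate CSDs by dividing the whole identity by $\hat q(\mathbf n)$. A merger of $k\le m_h+1$ lineages of type $h$ leaves at least $n_h$ of them, so $\hat q(\mathbf n + \mathbf m - (k-1)\mathbf e_h)/\hat q(\mathbf n) = \hat\pi(\mathbf m - (k-1)\mathbf e_h\mid\mathbf n)$ directly; for $m_h+2\le k\le n_h+m_h$ the merged count drops below $n_h$, and I would write $\mathbf n + \mathbf m - (k-1)\mathbf e_h = (\mathbf n - (k-m_h-1)\mathbf e_h) + (\mathbf m - m_h\mathbf e_h)$ to get $\hat q(\mathbf n + \mathbf m - (k-1)\mathbf e_h)/\hat q(\mathbf n) = \hat\pi(\mathbf m - m_h\mathbf e_h\mid\mathbf n - (k-m_h-1)\mathbf e_h)\,/\,\hat\pi((k-m_h-1)\mathbf e_h\mid\mathbf n - (k-m_h-1)\mathbf e_h)$, which is exactly the ratio term in \eqref{a_csd}. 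Finally I would take the appropriate $m_h/(n_h+m_h)$--weighted combination of the $|\mathcal H|$ component identities: the common factor $n_h+m_h$ introduced by differentiation cancels, $\binom{n_h+m_h}{k}/(n_h+m_h)$ becomes the $\tfrac{1}{n_h+m_h}\binom{n_h+m_h}{k}$ of \eqref{a_csd}, and since $\sum_h m_h = m$ the left--hand coefficients collect into $m\big[\tfrac{\Lambda(\{0\})(n+m-1)}{2} + \theta + \tfrac{1}{n+m}\sum_{k\ge2}\binom{n+m}{k}\lambda_{n+m,k}\big]\hat\pi(\mathbf m\mid\mathbf n)$, yielding \eqref{a_csd}.

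I expect the jump term to be the main obstacle: choosing the component decomposition so that each $\mathcal L_{h,\mathrm{jump}}$ is well defined against $r^{-2}\Lambda(dr)$, checking that the would--be divergent $k\in\{0,1\}$ pieces are precisely absorbed by the correction, and then carrying out the binomial bookkeeping that turns the reproduction operator into the coalescent rates $\lambda_{n+m,k}$ together with the identity that reassembles the diagonal. Converting the large--$k$ mergers into the ratio of CSDs and fixing the correct weighting of the component identities is fiddly but routine once the generator computation is done; everything outside the jump term is a direct transcription of the finite--alleles Fleming--Viot calculation of De Iorio and Griffiths.
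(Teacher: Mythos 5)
Your overall route is the paper's: split the $\Lambda$-Fleming--Viot generator into per-haplotype components as in \eqref{sum_of_generators}, impose $\widehat{\mathbb{E}}[\mathcal{L}_h q(\mathbf{n}+\mathbf{m}|\mathbf{X})]=0$ on the monomial test function, expand the reproduction term binomially so that the degree-$k$ coefficient integrates to $\lambda_{n+m,k}$, and convert the surviving moments into CSD ratios. Your handling of the diffusion and mutation terms, and your splitting of the $k$-sum at $k=m_h+1$ together with the identity $\hat q(\mathbf{n}+\mathbf{m}-(k-1)\mathbf{e}_h)/\hat q(\mathbf{n}) = \hat\pi(\mathbf{m}-m_h\mathbf{e}_h\mid\mathbf{n}-(k-m_h-1)\mathbf{e}_h)\,/\,\hat\pi((k-m_h-1)\mathbf{e}_h\mid\mathbf{n}-(k-m_h-1)\mathbf{e}_h)$, are exactly right; the paper leaves that last conversion implicit.

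The gap is in the jump component, precisely where you anticipated trouble, and your proposed fix does not land on \eqref{a_csd}. Applied to $q(\mathbf{n}|\cdot)$, your compensator contributes $-r(n_h-nx_h)q(\mathbf{n}|\mathbf{x})$ per component, while the divergent $k=1$ mismatch it is meant to absorb is $r(1-r)^{n-1}(n_h-nx_h)q(\mathbf{n}|\mathbf{x})$; the difference integrates to a finite leftover $(n_h-nx_h)q(\mathbf{n}|\mathbf{x})\int_0^1 r^{-1}[1-(1-r)^{n-1}]\Lambda(dr)$, and the surviving diagonal carries weight $x_h$ rather than $n_h/n$. Under $\widehat{\mathbb{E}}$ these produce terms $\widehat{\mathbb{E}}[X_h q(\mathbf{n}+\mathbf{m}|\mathbf{X})]=\widehat{\mathbb{E}}[q(\mathbf{n}+\mathbf{m}+\mathbf{e}_h|\mathbf{X})]$, i.e.\ the new unknown $\hat\pi(\mathbf{m}+\mathbf{e}_h|\mathbf{n})$, which does not appear in \eqref{a_csd}; so your claim that ``the leftover diagonal collapses onto'' $\tfrac{1}{n+m}\sum_{k\geq 2}\binom{n+m}{k}\lambda_{n+m,k}$ is not automatic and in fact fails for this choice of compensator. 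The paper instead cancels the $k\in\{0,1\}$ terms only at the level of the full sum over $h$ (where the cancellation is exact because $\sum_h x_h=1$ and $\sum_h n_h=n$) and then \emph{chooses} to redistribute the diagonal $-\sum_{k=2}^{n}\binom{n}{k}\lambda_{n,k}q(\mathbf{n}|\mathbf{x})$ across components with weights $n_h/n$, as in \eqref{lambda}; it is this redistribution that yields the $\tfrac{m}{n+m}\sum_{k=2}^{n+m}\binom{n+m}{k}\lambda_{n+m,k}$ term on the left of \eqref{a_csd}. You are right that the literal components of \eqref{sum_of_generators} are individually ill-defined when $\int_0^1 r^{-1}\Lambda(dr)=\infty$ (e.g.\ $\operatorname{Beta}(2-\alpha,\alpha)$ with $\alpha\in(1,2)$), and the paper glosses over this; but since the approximate CSD is defined by the component-wise ansatz, the choice of regularisation is part of the definition, and to prove the stated recursion you must adopt the paper's $n_h/n$ redistribution (or supplement your compensator with the additional closure $\widehat{\mathbb{E}}[X_h q]\approx (n_h/n)\widehat{\mathbb{E}}[q]$) rather than the martingale correction you describe.
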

\begin{proof}
The generator of the $\Lambda$-Fleming-Viot jump-diffusion can be written as
\begin{align}
\mathcal{ L }f( \mathbf{ x } ) =& \sum_{ h \in \mathcal{ H } } \frac{ \Lambda( \{ 0 \} ) x_h }{ 2 } \sum_{ h' \in \mathcal{ H } } ( \delta_{ h h' } - x_{ h' } ) \frac{ \partial^2 }{ \partial x_h \partial x_{ h' } } f( \mathbf{ x } ) \nonumber \\
&+ \sum_{ h \in \mathcal{ H } } \sum_{ l \in L } \theta_l \sum_{ a \in E_l } x_{ S_l^a( h ) } \left( P^{ ( l ) }_{ a h[ l ] } - \delta_{ a h[ l ] } \right) \frac{ \partial }{ \partial x_h } f( \mathbf{ x } ) \nonumber \\
&+ \sum_{ h \in \mathcal{ H } } x_h \int_{ ( 0, 1 ] } \left\{ f( ( 1 - r ) \mathbf{ x } + r \mathbf{ e }_h ) - f( \mathbf{ x } ) \right\} r^{ -2 } \Lambda( dr ) =: \sum_{ h \in \mathcal{ H } } \mathcal{ L }_h f( \mathbf{ x } ). \label{sum_of_generators}
\end{align}
Substituting $q( \mathbf{ n } | \mathbf{ x } )$ yields the following three terms on the R.H.S.
\begin{align}
&\sum_{ h \in \mathcal{ H } } \frac{ \Lambda( \{ 0 \} ) n_h }{ 2 } \left[ ( n_h - 1 ) q( \mathbf{ n } + \mathbf{ e }_h | \mathbf{ x } ) - \sum_{ h ' \in \mathcal{ H } }( n_{ h' } - \delta_{ h h' } ) q( \mathbf{ n } | \mathbf{ x } ) \right] \label{kingman}\\
&+ \sum_{ h \in \mathcal{ H } } n_h \sum_{ l \in L } \theta_l \left[ \sum_{ a \in E_l } P_{ a h[ l ] }^{ ( l ) } q( \mathbf{ n } - \mathbf{ e }_h + \mathbf{ e }_{ S_l^a( h ) } | \mathbf{ x } ) - q( \mathbf{ n } | \mathbf{ x } ) \right] \label{mutation}\\
&+ \int_{ ( 0, 1 ] } \Bigg\{ \sum_{ h \in \mathcal{ H } } \sum_{ k = 0 }^{ n_h } \binom{ n_h }{ k } r^k ( 1- r )^{ n -  k } q( \mathbf{ n } - ( k - 1 ) \mathbf{ e }_h | \mathbf{ x } ) \nonumber \\
&\;\;\;\;\;\;\;\;\;\;\;\;\;\;\;\;\; - \sum_{ k = 0 }^n \binom{ n }{ k } r^k ( 1 - r )^{ n - k } q( \mathbf{ n } | \mathbf{ x } )\Bigg\} r^{ -2 } \Lambda( dr ). \nonumber
\end{align}
The $k = 0$ terms inside the integral cancel because $\sum_{ h \in \mathcal{ H } } x_h = 1$ and the $k = 1$ terms cancel because $\sum_{ h \in \mathcal{ H } } n_h = n$, which means the third summand can be written
\begin{equation}\label{lambda}
\sum_{ h \in \mathcal{ H } } \left\{ \sum_{ k = 2 }^{ n_h } \binom{ n_h }{ k } \lambda_{ n, k } q( \mathbf{ n } - ( k - 1 ) \mathbf{ e }_h | \mathbf{ x } ) - \frac{ n_h }{ n } \sum_{ k = 2 }^n \binom{ n }{ k } \lambda_{ n, k } q( \mathbf{ n } | \mathbf{ x } ) \right\}.
\end{equation}
Substituting \eqref{kingman}, \eqref{mutation} and \eqref{lambda} into \eqref{sum_of_generators} and rearranging gives
\begin{align}
&\sum_{ h \in \mathcal{ H } }\left[ \frac{ \Lambda( \{ 0 \} ) ( n - 1 ) }{ 2 } + \theta + \frac{ 1 }{ n }\sum_{ k = 2 }^n \binom{ n }{ k } \lambda_{ n, k } \right] q( \mathbf{ n } | \mathbf{ x } ) \nonumber\\
&= \sum_{ h \in \mathcal{ H } } \Bigg\{ \frac{ \Lambda( \{ 0 \} ) ( n_h -1 ) }{ 2 } q( \mathbf{ n } - \mathbf{ e }_h | \mathbf{ x } ) + \sum_{ l \in L } \theta_l \sum_{ a \in E_l } P_{ a h[ l ] }^{ ( l ) } q( \mathbf{ n } - \mathbf{ e }_h + \mathbf{ e }_{ S_l^a( h ) } | \mathbf{ x } ) \nonumber \\
&\;\;\;\;\;\;\;\;\;\;\;\;\;\;\; + \frac{ 1 }{ n_h }\sum_{ k = 2 }^{ n_h } \binom{ n_h }{ k } \lambda_{ n, k } q( \mathbf{ n } - ( k - 1 ) \mathbf{ e }_h | \mathbf{ x } ) \Bigg\}. \label{full_recursion}
\end{align}
The component-wise vanishing property implies
\begin{equation*}
\widehat{ \mathbb{ E } }\left[ \sum_{ h \in \mathcal{ H } } m_h \mathcal{ L }_h q( \mathbf{ n } | \mathbf{ X } ) \right] = \sum_{ h \in \mathcal{ H } } m_h \widehat{ \mathbb{ E } }\left[ \mathcal{ L }_h q( \mathbf{ n } | \mathbf{ X } ) \right] = 0,
\end{equation*}
so that \eqref{full_recursion} becomes
\begin{align*}
&m \left[ \frac{ \Lambda( \{ 0 \} ) ( n - 1 ) }{ 2 } + \theta + \frac{ 1 }{ n }\sum_{ k = 2 }^n \binom{ n }{ k } \lambda_{ n, k } \right] \widehat{ \mathbb{ E } } \left[ q( \mathbf{ n } | \mathbf{ X } ) \right] \\
&= \sum_{ h \in \mathcal{ H } } m_h \Bigg\{ \frac{ \Lambda( \{ 0 \} ) ( n_h - 1 ) }{ 2 } \widehat{ \mathbb{ E } } \left[ q( \mathbf{ n } - \mathbf{ e }_h | \mathbf{ X } ) \right] + \sum_{ l \in L } \theta_l  \sum_{ a \in E_l } P_{ a h[ l ] }^{ ( l ) } \widehat{ \mathbb{ E } }\left[ q( \mathbf{ n } - \mathbf{ e }_h + \mathbf{ e }_{ S_l^a( h ) } | \mathbf{ X } ) \right]  \\
&\;\;\;\;\;\;\;\;\;\;\;\;\;\;\;\;\;\;\;\; + \frac{ 1 }{ n_h } \sum_{ k = 2 }^{ n_h } \binom{ n_h }{ k } \lambda_{ n, k } \widehat{ \mathbb{ E } }\left[ q( \mathbf{ n } - ( k - 1 ) \mathbf{ e }_h | \mathbf{ X } ) \right] \Bigg\}.
\end{align*}
Note that $\pi( \mathbf{ m } | \mathbf{ n } ) = \mathbb{ E }\left[ q( \mathbf{ n } + \mathbf{ m } | \mathbf{ X } ) \right] / \mathbb{ E }\left[ q( \mathbf{ n } | \mathbf{ X } ) \right]$ so that substituting $\mathbf{ n } \mapsto \mathbf{ n } + \mathbf{ m }$, assuming that $\mathbb{ E } = \widehat{ \mathbb{ E } }$ and dividing by $\mathbb{ E }\left[ q( \mathbf{ n } | \mathbf{ X } ) \right]$ gives the desired recursion.
\end{proof}
\begin{cor}\label{univariate_cor}
The univariate approximate CSDs $\hat{ \pi }( \mathbf{ e }_h | \mathbf{ n } )$ satisfy
\begin{align}
&\left[ \frac{ \Lambda( \{ 0 \} ) n }{ 2 } + \theta + \frac{ 1 }{ n + 1 } \sum_{ k = 2 }^{ n + 1 } \binom{ n + 1 }{ k } \lambda_{ n + 1, k } \right] \hat{ \pi }( \mathbf{ e }_h | \mathbf{ n } ) = \frac{ n_h }{ 2 }\left( \Lambda( \{ 0 \} ) + \lambda_{ n + 1, 2 } \right) \nonumber \\
&+ \sum_{ l \in L } \theta_l  \sum_{ a \in E_l } P_{ a h[ l ] }^{ ( l ) } \hat{ \pi }( \mathbf{ e }_{ S_l^a( h ) } | \mathbf{ n } ) + \frac{ 1 }{ n_h + 1 } \sum_{ k = 3 }^{ n_h + 1 } \binom{ n_h + 1 }{ k } \frac{ \lambda_{ n + 1, k } }{ \hat{ \pi }( ( k - 2 ) \mathbf{ e }_h | \mathbf{ n } - ( k - 2 ) \mathbf{ e }_h ) }. \label{univariate_recursion}
\end{align}
\end{cor}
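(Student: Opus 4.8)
The plan is to obtain \eqref{univariate_recursion} as the special case $\mathbf{m} = \mathbf{e}_h$ of the recursion \eqref{a_csd}, using only the boundary convention $\hat{\pi}(\mathbf{0} | \mathbf{n}) = 1$; the latter holds for any sampling distribution since drawing no further individuals is certain, and it is also the $\mathbf{m} = \mathbf{0}$ case of the identity $\hat{\pi}(\mathbf{m} | \mathbf{n}) = \widehat{\mathbb{E}}[q(\mathbf{n} + \mathbf{m} | \mathbf{X})] / \widehat{\mathbb{E}}[q(\mathbf{n} | \mathbf{X})]$. First I would substitute $\mathbf{m} = \mathbf{e}_h$ into \eqref{a_csd}, so that $m = 1$, $n + m = n + 1$, $m_h = 1$, $n_h + m_h = n_h + 1$, and $m_{h'} = 0$ for every $h' \neq h$. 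On the left-hand side the prefactor $m = 1$ disappears and the bracket becomes $\frac{\Lambda(\{0\}) n}{2} + \theta + \frac{1}{n+1} \sum_{k=2}^{n+1} \binom{n+1}{k} \lambda_{n+1,k}$, multiplying $\hat{\pi}(\mathbf{e}_h | \mathbf{n})$ exactly as claimed, while on the right-hand side the outer sum over $\mathcal{H}$ collapses to the single $h$-term since every other summand carries the factor $m_{h'} = 0$.

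Next I would simplify that surviving summand term by term. The diffusion term is $\frac{\Lambda(\{0\})(n_h + m_h - 1)}{2} \hat{\pi}(\mathbf{m} - \mathbf{e}_h | \mathbf{n}) = \frac{\Lambda(\{0\}) n_h}{2} \hat{\pi}(\mathbf{0} | \mathbf{n}) = \frac{\Lambda(\{0\}) n_h}{2}$, and the mutation term already has the required shape once one notes $\mathbf{m} - \mathbf{e}_h + \mathbf{e}_{S_l^a(h)} = \mathbf{e}_{S_l^a(h)}$. Inside the coalescence braces, the first inner sum runs over $k \in \{2, \ldots, m_h + 1\} = \{2\}$ and so contributes only $\binom{n_h+1}{2} \lambda_{n+1,2} \hat{\pi}(\mathbf{0} | \mathbf{n}) = \binom{n_h+1}{2} \lambda_{n+1,2}$; carrying the prefactor $\frac{1}{n_h+1}$ and using $\frac{1}{n_h+1} \binom{n_h+1}{2} = \frac{n_h}{2}$ gives $\frac{n_h}{2} \lambda_{n+1,2}$, which I would merge with the diffusion term into $\frac{n_h}{2}(\Lambda(\{0\}) + \lambda_{n+1,2})$. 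The second inner sum runs over $k \in \{m_h + 2, \ldots, n_h + m_h\} = \{3, \ldots, n_h + 1\}$, and in each summand the numerator is $\hat{\pi}(\mathbf{m} - m_h \mathbf{e}_h | \cdot) = \hat{\pi}(\mathbf{0} | \cdot) = 1$ while $k - m_h - 1 = k - 2$, yielding $\frac{1}{n_h+1} \sum_{k=3}^{n_h+1} \binom{n_h+1}{k} \lambda_{n+1,k} / \hat{\pi}((k-2)\mathbf{e}_h | \mathbf{n} - (k-2)\mathbf{e}_h)$. Collecting these pieces reproduces \eqref{univariate_recursion}.

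I do not expect any substantive obstacle: the corollary is a pure specialisation of \eqref{a_csd}. The only points demanding care are the boundary convention $\hat{\pi}(\mathbf{0} | \cdot) = 1$, the correct re-indexing of the two inner coalescence sums when $m_h = 1$ (the first collapsing to $k = 2$ alone, the second to $k \in \{3, \ldots, n_h + 1\}$), and the observation that for these $k$ the argument $\mathbf{n} - (k-2)\mathbf{e}_h$ remains nonnegative because $k - 2 \le n_h - 1$.
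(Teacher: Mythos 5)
Your proposal is correct and is exactly the paper's own argument: the corollary is proved by substituting $\mathbf{m} = \mathbf{e}_h$ into \eqref{a_csd}, and your term-by-term bookkeeping (the collapse of the outer sum, the boundary convention $\hat{\pi}(\mathbf{0}\,|\,\cdot) = 1$, the first inner sum reducing to $k=2$ and merging with the diffusion term, and the re-indexing of the second inner sum to $k \in \{3,\ldots,n_h+1\}$) is all accurate. The paper states this in one line; you have simply made the routine verification explicit.
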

\begin{proof}
The result follows by substituting $\mathbf{ m } = \mathbf{ e }_h$ into \eqref{a_csd}.
\end{proof}
As per Remark \ref{univariate_remark} it is sufficient to work with the simpler recursion \eqref{univariate_recursion} as opposed to the full recursion \eqref{a_csd}.
However, because of the denominator in the final term of \eqref{univariate_recursion} the resulting system of equations still contains as many unknowns as the recursion for the full likelihood.
Hence further approximations are needed to obtain a family of proposal distributions which is feasible to evaluate and sample.
\vskip 11pt
\begin{defn}
Setting $\Lambda = \delta_0$ in \eqref{univariate_recursion} results in the approximate CSDs derived in \cite{Stephens00} for Kingman's coalescent.
This approximation ignores the dynamics of the $\Lambda$-coalescent but results in a valid IS proposal distribution that still simulates $\Lambda$-coalescent trees.
We denote this proposal distribution by $\mathbb{ Q }^{ \text{SD} }$.
\end{defn}

In \cite{Paul10} Paul and Song introduced the \emph{trunk ancestry}, which can be used to obtain an approximation which makes better use of the $\Lambda$-coalescent structure.
We briefly recall the definition of the trunk ancestry here before using it to define a second approximate CSD family.
\vskip 11pt
\begin{defn}
The trunk ancestry $\mathcal{ A }^*( \mathbf{ n } )$ of a sample $\mathbf{ n }$ is a deterministic, degenerate process started from $\mathbf{ n }$ and evolving backwards in time but undergoing no dynamics.
\end{defn}
In the trunk ancestry, the lineages that form $\mathbf{ n }$ do not mutate or coalesce, and hence do not reach a MRCA.
Instead they form an ancestral forest or ``trunk" that extends infinitely into the past.

The first two terms on the R.H.S.~of \eqref{univariate_recursion}, corresponding to pairwise mergers and mutations, can be interpreted genealogically as the rates with which the $(n+1)^{ \text{th} }$ lineage mutates and is absorbed into $\mathcal{ A }^*( \mathbf{ n } )$ by a pairwise merger.
The third term corresponds to a multiple merger between the $( n + 1 )^{ \text{th} }$ lineage and two or more lineages in $\mathbf{ n }$ of the same type.
Because this last term involves coalescence between lineages in $\mathbf{ n }$ it does not have an interpretation in terms of $\mathcal{ A }^*( \mathbf{ n } )$.
However it can be forced into this framework by noting that the only relevant information is the time of absorption of the $( n+1)^{ \text{th} }$ lineage and the type of the lineage(s) in $\mathbf{ n }$ with which it merges.
Motivated by the trunk ancestral interpretation we expect the following recursion to be a good, tractable approximation to \eqref{univariate_recursion}.
\vskip 11pt
\begin{defn}
Let $\hat{ \pi }^K( \mathbf{ e }_h | \mathbf{ n } )$ be the distribution of the type of a lineage which, when traced backwards in time, mutates with rates $\theta_l$ according to the transition matrix $P^{ ( l ) }$ at each locus $l \in L$ and is absorbed into $\mathcal{ A }^*( \mathbf{ n } )$ with rate 
\begin{equation*}
\frac{ \Lambda( \{ 0 \} ) n }{ 2 } + \frac{ 1 }{ n + 1 }\sum_{ k = 2 }^{ n + 1 } \binom{ n + 1 }{ k } \lambda_{ n + 1, k },
\end{equation*}
choosing its parent uniformly upon absorption.
The corresponding IS proposal distribution is denoted by $\mathbb{ Q }^{ \text{K} }$.
\end{defn}
\vskip 11pt
\begin{prop}\label{lambda_acsd}
$\hat{ \pi }^K( \mathbf{ e }_h | \mathbf{ n } )$ satisfies the equations
\begin{align*}
\Bigg[ \frac{ \Lambda( \{ 0 \} ) n }{ 2 } + \theta + &\frac{ 1 }{ n + 1 }\sum_{ k = 2 }^{ n + 1 } \binom{ n + 1 }{ k } \lambda_{ n + 1, k } \Bigg] \hat{ \pi }^K( \mathbf{ e }_h | \mathbf{ n } ) = \frac{ \Lambda( \{ 0 \} ) n_h }{ 2 }\\
&+ \sum_{ l \in L } \theta_l \sum_{ a \in E_l } P_{ a h[ l ] }^{ ( l ) } \hat{ \pi }^K( \mathbf{ e }_{ S_l^a( h ) } | \mathbf{ n } )  + \frac{ n_h }{ n ( n + 1 ) } \sum_{ k = 2 }^{ n + 1 } \binom{ n + 1 }{ k } \lambda_{ n + 1, k } 
\end{align*}
and is the stationary distribution of the Markov chain on $\mathcal{ H }$ with transition matrix
\begin{equation}\label{markov_chain}
\frac{ P + \left[ \Lambda( \{ 0 \} ) / 2 + \frac{ 1 }{ n( n + 1 ) } \sum_{ k = 2 }^{ n + 1 } \binom{ n + 1 }{ k } \lambda_{ n + 1, k } \right] N }{ \sum_{ l \in L } \theta_l + \frac{ \Lambda( \{ 0 \} ) n }{ 2 } + \frac{ 1 }{ n + 1 } \sum_{ k = 2 }^{ n + 1 } \binom{ n + 1 }{ k } \lambda_{ n + 1, k } },
\end{equation}
where $N$ is the $| \mathcal{ H } | \times | \mathcal{ H } |$ matrix with each row equal to $( n_1, \ldots, n_{ | \mathcal{ H } | } )$ and $P$ is the transition probability matrix on $\mathcal{ H }$ formed as a mixture of the matrices $\{ P^{ ( l ) } \}_{ l \in L }$ with weights $\{ \theta_l \}_{ l \in L }$.
\end{prop}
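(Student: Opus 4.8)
The plan is to exploit the explicit genealogical description of $\hat{\pi}^K(\mathbf{e}_h\mid\mathbf{n})$, extract the recursion by a first-step (one-event) decomposition, and then translate that recursion into the stated stationarity condition by elementary linear algebra. Write $\theta:=\sum_{l\in L}\theta_l$ and let $\rho:=\Lambda(\{0\})n/2+\tfrac{1}{n+1}\sum_{k=2}^{n+1}\binom{n+1}{k}\lambda_{n+1,k}$ denote the absorption rate from the Definition, so that the novel lineage, traced backwards against the (deterministic) trunk $\mathcal{A}^*(\mathbf{n})$, is run against independent exponential clocks: a mutation clock at locus $l$ with rate $\theta_l$ for each $l\in L$, and a single absorption clock with rate $\rho$.

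First I would condition on which clock rings first. With probability $\rho/(\theta+\rho)$ absorption precedes every mutation; then there is no mutation between the novel leaf and its parent, the parent is chosen uniformly among the $n$ trunk lineages, so the leaf has type $h$ with probability $n_h/n$. With probability $\theta_l/(\theta+\rho)$ a mutation at locus $l$ rings first; by the strong Markov property the segment of the lineage strictly ancestral to that mutation is an independent copy of the same construction, so the type immediately ancestral to the mutation has law $\hat{\pi}^K(\cdot\mid\mathbf{n})$, and conditionally on that ancestral type being $S_l^a(h)$ the present-day allele at locus $l$ is $h[l]$ with probability $P^{(l)}_{a h[l]}$ (the same mutation direction as in \eqref{forwards_probs}). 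Summing over these mutually exclusive first events gives
\[
(\theta+\rho)\,\hat{\pi}^K(\mathbf{e}_h\mid\mathbf{n})=\rho\,\frac{n_h}{n}+\sum_{l\in L}\theta_l\sum_{a\in E_l}P^{(l)}_{a h[l]}\,\hat{\pi}^K(\mathbf{e}_{S_l^a(h)}\mid\mathbf{n}),
\]
and since $\rho\,n_h/n=\Lambda(\{0\})n_h/2+\tfrac{n_h}{n(n+1)}\sum_{k=2}^{n+1}\binom{n+1}{k}\lambda_{n+1,k}$, this is exactly the displayed recursion. (Equivalently, bypassing the renewal step, $\hat{\pi}^K(\cdot\mid\mathbf{n})$ is the law of the type obtained by picking a parent uniformly from $\mathbf{n}$ and running the forward mutation semigroup for an independent $\mathrm{Exp}(\rho)$ time, i.e.\ the row vector $\tfrac{1}{n}(n_1,\dots,n_{|\mathcal{H}|})\,\rho\,(\rho I-M)^{-1}$ with $M$ the mutation generator; rearranging this resolvent identity yields the same recursion.)

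For the stationarity claim I would read the recursion as a fixed-point equation for the row vector $v:=(\hat{\pi}^K(\mathbf{e}_h\mid\mathbf{n}))_{h\in\mathcal{H}}$. Because $P^{(l)}$ alters only locus $l$, one has $(vP)_h=\sum_{l\in L}\theta_l\sum_{a\in E_l}v_{S_l^a(h)}P^{(l)}_{a h[l]}$ with $P=\sum_{l\in L}\theta_l P^{(l)}$, while $(vN)_h=n_h\sum_{g}v_g=n_h$ since $v$ is a probability vector; as $\big(\Lambda(\{0\})/2+\tfrac{1}{n(n+1)}\sum_{k}\binom{n+1}{k}\lambda_{n+1,k}\big)n=\rho$, the recursion reads precisely $v\big(P+[\Lambda(\{0\})/2+\tfrac{1}{n(n+1)}\sum_{k}\binom{n+1}{k}\lambda_{n+1,k}]N\big)=(\theta+\rho)v$. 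The numerator has constant row sums $\theta+\rho$, so the matrix in \eqref{markov_chain} is stochastic, and dividing by $\theta+\rho$ shows $v$ is a stationary distribution for it; for uniqueness one writes the recursion as $v(\rho I-M)=\rho\,\mathbf{n}/n$ and notes $\rho I-M$ is invertible (the generator $M$ has spectrum in $\{\operatorname{Re}\le 0\}$ and $\rho>0$ for every probability measure $\Lambda$), which pins $v$ down and makes it the unique stationary vector.

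The main obstacle is the first-step analysis: one must justify carefully that, conditional on the earliest backward event being a mutation at locus $l$, the lineage's type just ancestral to it is again governed by $\hat{\pi}^K(\cdot\mid\mathbf{n})$ — a strong-Markov/renewal argument against the frozen trunk background — and keep the mutation kernel pointed in the right direction, so that the leaf allele $h[l]$ arises from the ancestral allele $a$ with weight $P^{(l)}_{a h[l]}$ and the argument of $\hat{\pi}^K$ is $S_l^a(h)$. Everything after the recursion is routine bookkeeping, and it is worth recording the consistency check against \eqref{univariate_recursion}: the two recursions differ only in that the exact coefficient $\tfrac{n_h}{2}(\Lambda(\{0\})+\lambda_{n+1,2})$ together with the nonlinear $k\ge 3$ term is replaced by the single linear term $\rho\,n_h/n$, which is precisely the trunk-ancestry approximation motivating the Definition.
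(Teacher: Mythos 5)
Your proposal is correct and follows essentially the same route as the paper, which simply decomposes on the first event (mutation versus absorption) encountered tracing the $(n+1)^{\text{th}}$ lineage backwards and then reads the resulting simultaneous equations as the stationarity condition for the matrix \eqref{markov_chain}; your write-up merely makes explicit the renewal step, the row-sum/stochasticity check, and a uniqueness remark that the paper leaves implicit.
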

\begin{proof}
The simultaneous equations follow by tracing the $( n + 1 )^{ \text{th} }$ lineage backwards in time and decomposing based on the first event, and the transition matrix follows immediately from the simultaneous equations.
\end{proof}
Note that $\mathbb{ Q }^{ \text{K} }$ has a very similar form to $\mathbb{ Q }^{ \text{SD} }$, and as a consequence of the linearity in $N$ in \eqref{markov_chain} the efficient Gaussian quadrature approximation of Appendix A in \cite{Stephens00} can be applied to both with minor modifications for $\mathbb{ Q }^{\text{K}}$.

\section{Simulation study}\label{simulation}

In this section we present an empirical comparison between the IS algorithms defined by $\mathbb{ Q }^{ \text{SD} }$ and $\mathbb{ Q }^{ \text{K} }$, and the generalised Griffiths-Tavar\'{e} proposal distribution from \cite{Birkner08} which will be denoted by $\mathbb{ Q }^{ \text{GT} }$.
We will also introduce two PAC algorithms making use of, respectively, $\hat{ \pi }^{ \text{K} }$ and a refinement to be specified below, and investigate their accuracy.
Simulated samples have been generated using the efficient sampling algorithm provided in Section 1.4.4 of \cite{Birkner09b}.
Approximate CSDs have been evaluated using a Gauss quadrature of order four (see Appendix A of \cite{Stephens00} for details).

Simulated chromosomes consist of 15 loci with two possible alleles denoted $\{ 0, 1 \}$ and mutation matrix $P^{ ( l ) } = \left( \begin{array}{cc} 0 & 1 \\ 1 & 0 \end{array} \right)$ at each locus.
The coalescent is a $\operatorname{Beta}( 2 - \alpha, \alpha )$-coalescent.
All simulations have been run on  a single core on a Toshiba laptop, and make use of a stopping time resampling scheme \cite{Jenkins12} with resampling checks made at hitting times of all sample sizes reaching $B = \{ n - 5, n - 10, \ldots, 5 \}$.
This generic resampling regime has been chosen for simplicity and without regard for any particular proposal distribution.

\subsection{Experiment 1}

The total mutation rate is $\theta = 0.1$ spread evenly among all 15 loci.
The coalescent is specified as $\alpha = 1.5$.
The data consists of 100 sampled chromosomes, 95 of which share a single type, four lineages a second type one mutation away from the main block, and a single lineage is of a third type one different mutation removed from the main block.

We consider inferring both $\theta$ and $\alpha$ individually, assuming all other parameters are known and that $\theta_l = \theta / L$ for every $l \in L$.
Eight independent simulations of 30 000 particles each were run on an evenly spaced grid of mutation rates spanning the interval $[ 0.025, 0.2 ]$.
The same simulations were then repeated on an evenly spaced grid spanning $\alpha \in [ 1.1125, 1.9 ]$.
The resulting likelihood surfaces are shown in Figure \ref{results_100_15}.
\begin{figure}[!ht]
\centering
\includegraphics[width = \linewidth]{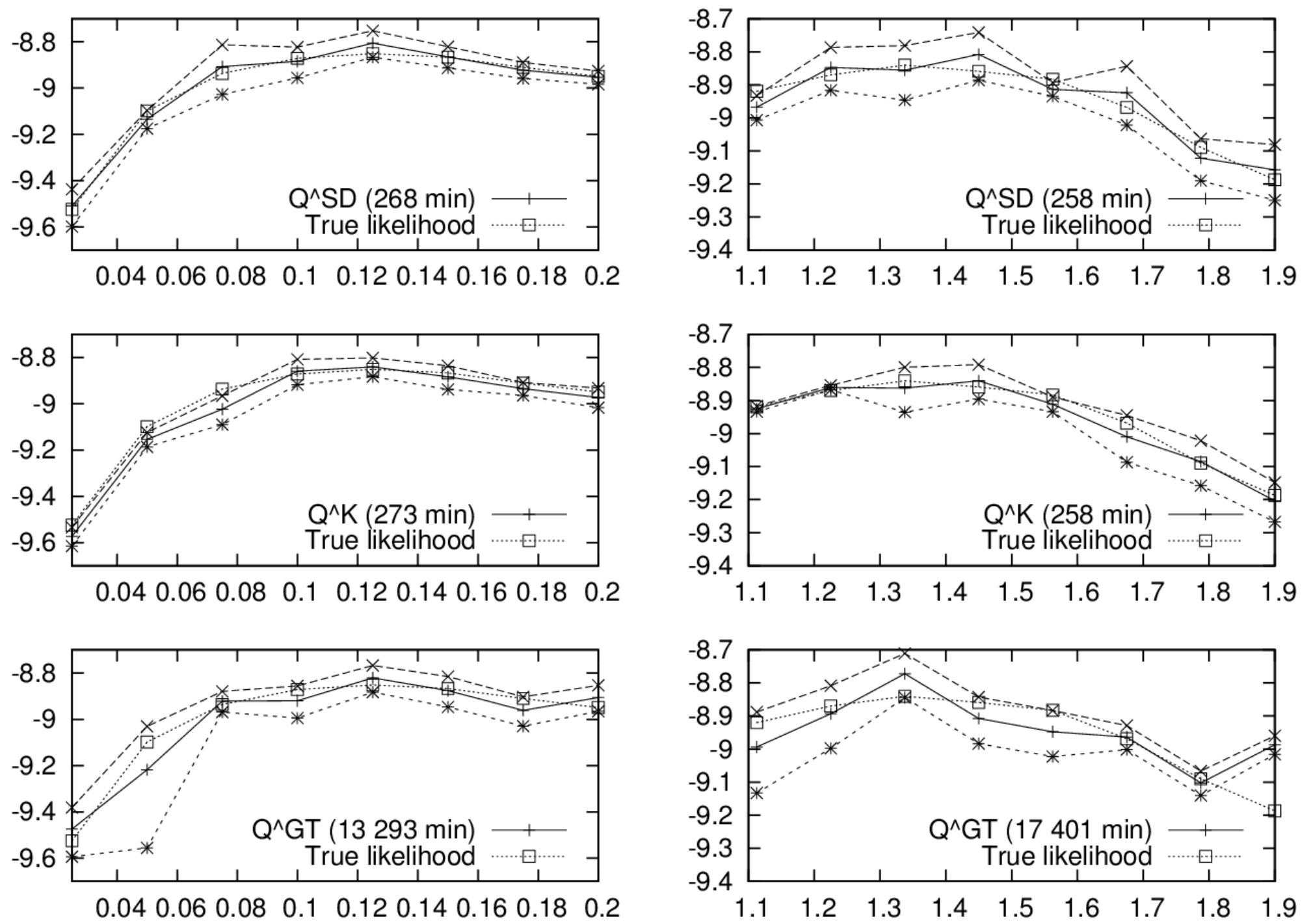}
\caption{Simulated log-likelihood surfaces from 30 000 particles with $\pm 2$SE confidence envelopes. The left column is for $\theta$ and the right for $\alpha$. The true surfaces are based on a 1 000 000 particle simulation using the $\mathbb{ Q }^{ \text{K} }$ proposal distribution.}
\label{results_100_15}
\end{figure}

The most striking observation is that both approximate CSDs yield algorithms which are two orders of magnitude faster than the Griffiths-Tavar\'{e} scheme.
Moreover, it is clear that the $\alpha$-surface obtained from $\mathbb{ Q }^{ \text{GT} }$ has not fully converged.
The wide confidence envelope at the left hand edge and the lack of monotonicity at the right hand edge of the $\mathbb{ Q }^{ \text{GT} }$ $\theta$-surface are indicative of poorer performance when inferring $\theta$ as well.

The runtimes of $\mathbb{ Q }^{ \text{SD} }$ and $\mathbb{ Q }^{ \text{K} }$ are very similar in both cases, and all four surfaces from these proposals are good approximations of the truth.
In the $\theta$-case the accuracy of the two is very similar, but in the $\alpha$-case $\mathbb{ Q }^{ \text{K} }$ yields noticeably tighter confidence bounds and a smoother surface.
This is particularly true of low values of $\alpha$, which correspond to Beta-coalescents that are very different from $\Lambda = \delta_0$.

Joint inference of $\alpha$ and $\theta$ is also of interest.
Figure \ref{results_surface} shows a joint likelihood heat map for the two parameters constructed from a grid of simulations of 30 000 particles from the $\mathbb{ Q }^{ \text{K} }$ proposal.
The surface is flat due to the limited amount of information in 100 samples, but the maximum likelihood estimator is close to the true $( 1.5, 0.1 )$ and the surface shows a high degree of monotonicity.
\begin{figure}[!ht]
\centering
\includegraphics[width = \linewidth]{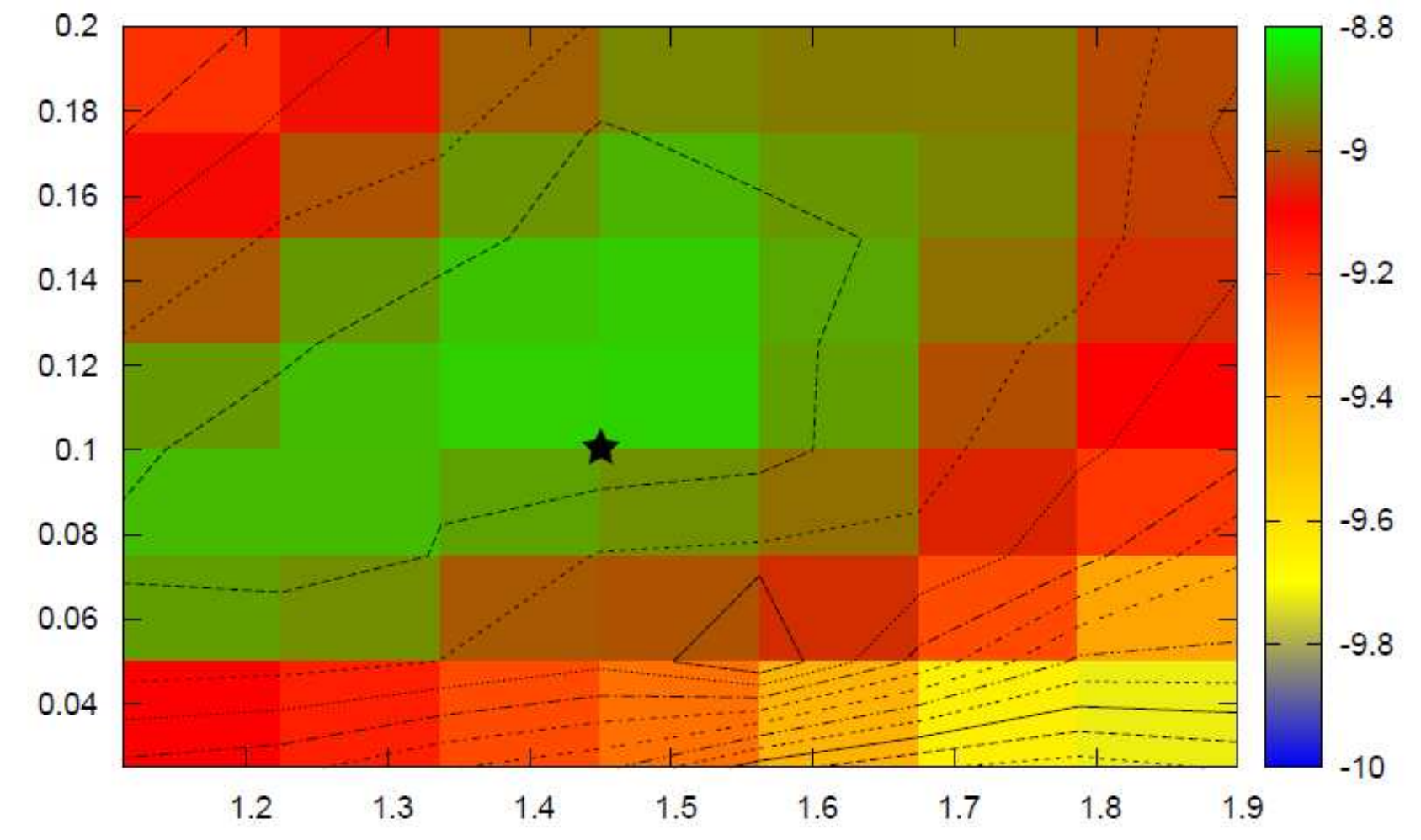}
\caption{Simulated likelihood surface from 30 000 particles using $\mathbb{ Q }^{ \text{K} }$. The surface is interpolated from an $8 \times 8$ grid of independent simulations.
The star denotes the MLE, which must lie on one of the grid points.}
\label{results_surface}
\end{figure}

\subsection{Experiment 2}

We expect the performance of the $\mathbb{ Q }^{ \text{SD} }$ proposal to deteriorate the further the true model is from Kingman's coalescent, and the more demanding the data set.
To that end the one dimensional inference problems for $\theta$ and $\alpha$ were repeated for a sample of 150 lineages with true parameters $\theta = 0.15$ and $\alpha = 1.2$.
The data set consists of 144 lineages of a given type with four other types present, each a single mutation removed from the main group.
The sizes of these groups are 3, 1, 1, 1.
The results are shown in Figure \ref{results_150_15}.
\begin{figure}[!ht]
\centering
\includegraphics[width = \linewidth]{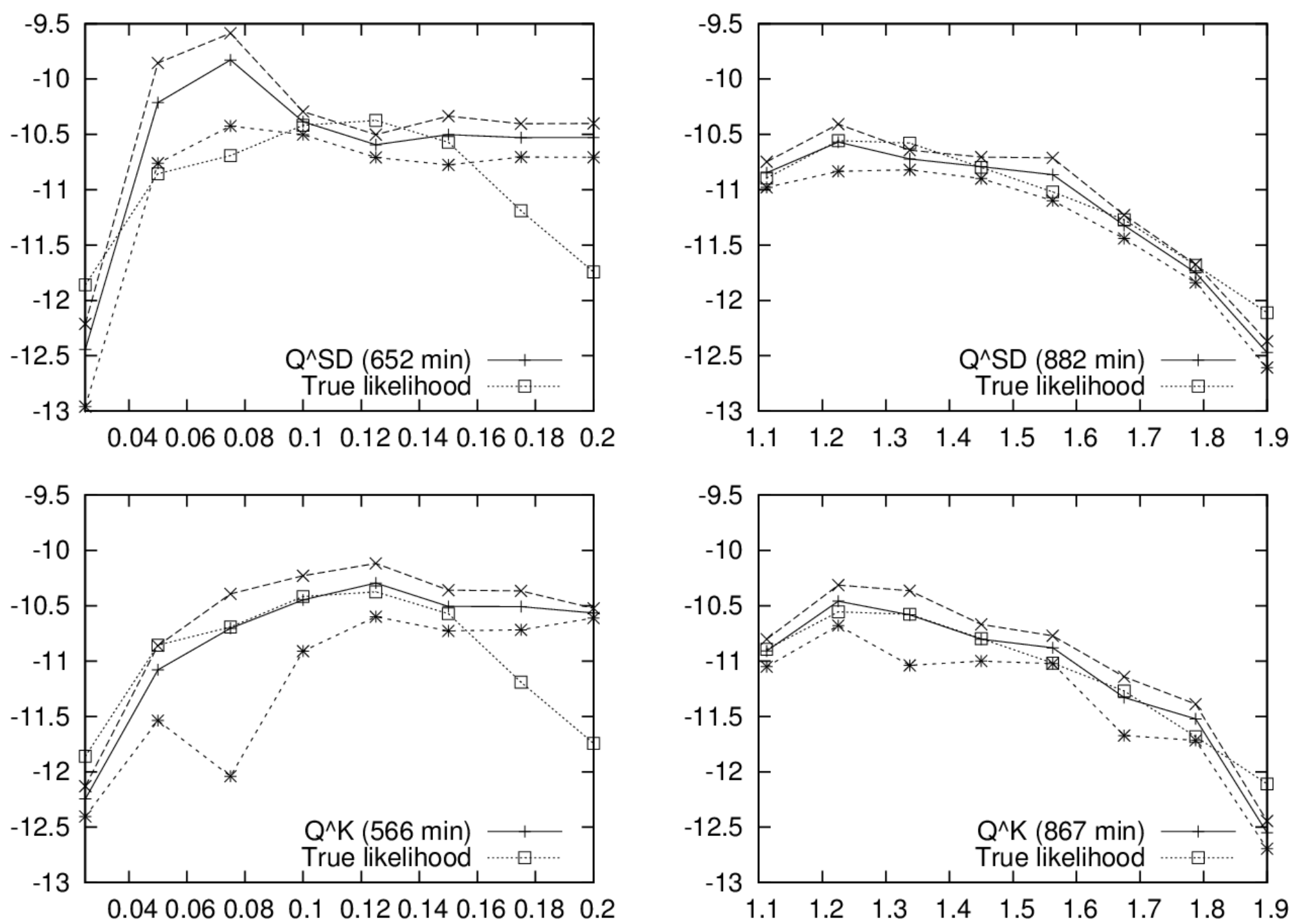}
\caption{Simulated log-likelihood surfaces from 30 000 particles with $\pm 2$SE confidence envelopes. The left column is for $\theta$ and the right for $\alpha$. The downward spike in the lower confidence boundary of the bottom left surface is an artifact caused by a negative value of the estimate, and the real part of the logarithm has been plotted. The values of the standard errors have no such spike.}
\label{results_150_15}
\end{figure}

$\mathbb{ Q }^{ \text{K} }$ is noticeably faster when inferring $\theta$, and slightly faster when inferring $\alpha$.
It also produces substantially more accurate estimates than $\mathbb{ Q }^{ \text{SD} }$ for small values of $\theta$.
30 000 particle runs have not yielded an accurate estimate for large values of $\theta$ from either algorithm.
The $\alpha$-surface from $\mathbb{ Q }^{ \text{SD} }$ looks superficially better, but both surfaces are very similar and good matches to the true likelihood.

This deterioration of the performance of $\mathbb{ Q }^{ \text{SD} }$ is to be expected because the true $\operatorname{Beta}( 0.8, 1.2 )$-coalescent is a significant departure from the $\Lambda = \delta_0$ assumption used to derive the corresponding approximate CSDs.
Such coalescents are of particular interest because significantly more efficient implementations exist for Kingman's coalescent, and these should be preferred whenever the Kingman hypothesis of $\Lambda = \delta_0$ cannot be rejected.
It is likely that the overestimated likelihood near $\theta = 0.06$ coincides with the MLE for this data set, had it been generated by Kingman's coalescent.
Hence $\mathbb{ Q }^{ \text{K} }$ is the recommended proposal distribution in practice.

Based upon the reported run times in Figures \ref{results_100_15} and \ref{results_150_15}, we expect our IS algorithm to be feasible for samples containing hundreds of lineages formed of tens of loci, or an order of magnitude more if methods such as a driving value \cite{Griffiths94c} or bridge sampling \cite{Meng96} are employed to reduce the number of independent simulations.
There is also a strong dependence on model parameters: fast coalescence (or low $\alpha$ in our setting) corresponds to faster simulation runs, and both high mutation rate and large haplotype space will result in a slower algorithm.

\subsection{Experiment 3: product of approximate conditionals}\label{exp3}

The IS algorithms used in the previous numerical experiments provide accurate results with reasonable computational cost, but the inference problems and data sets are of toy size and complexity.
It is clear that these algorithms will be too slow for many problems of interest, such as genome-wide data or large sample sizes.
The PAC method is a principled way of overcoming this restriction at the cost of asymptotic correctness, but with very significant improvements in speed.
It is based on decomposing the likelihood of observed alleles $h_1, \ldots, h_n$ into a product of CSDs:
\begin{equation*}
\mathbb{ P }( h_1, h_2, \ldots, h_n ) = \pi( h_n | h_1, \ldots, h_{ n - 1 } ) \pi( h_{ n - 1 } | h_1, \ldots, h_{ n - 2 } ) \times \ldots \times \pi( h_2 | h_1 ) \pi( h_1 )
\end{equation*}
and then substituting in a tractable, approximate CSD to obtain computable estimates.
We consider two different classes of CSDs: $\hat{ \pi }^{ \text{K} }( \cdot | \mathbf{ n } )$, and a modified version in which a lineage is absorbed into $\mathcal{ A }^*( \mathbf{ n } )$ with rate
\begin{equation*}
\sum_{ h \in \mathcal{ H } } \left\{ \frac{ \Lambda( \{ 0 \} ) n_h }{ 2 } + \frac{ 1 }{ n_h + 1 }\sum_{ k = 2 }^{ n_h + 1 } \binom{ n_h + 1 }{ k } \lambda_{ n + 1, k } \right\}
\end{equation*}
and inherits the type of the cluster $n_h$ into which it is absorbed.
This approximate CSD will be denoted by $\hat{ \pi }^{ \text{K2} }( \cdot | \mathbf{ n } )$.
Note that because $\hat{ \pi }^{ \text{K2} }( \cdot | \mathbf{ n } )$ depends nonlinearly on the exact frequencies $( n_h )_{ h \in \mathcal{ H } }$, the precomputations which are possible for all other CSDs considered in this paper are not possible (see Proposition 1 of \cite{Stephens00} for details).
For an IS algorithm this loss of efficiency in evaluating the CSDs would be devastating, but PAC algorithms are fast enough to remain feasible.
The order of the Gauss quadrature used to approximate the CSDs has also been increased to 10 for both families.

Neither approximate CSD family is exchangeable, so the estimates of the likelihood depend on the order in which the count data $\mathbf{ n }$ is conditioned upon.
Following the approach of Li and Stephens, and subsequent works making use of the PAC method, we partially address this issue by averaging our estimates across 1 000 uniformly random permutations of the data.
The number of permutations is substantially larger than what has been used for PAC models based on Kingman's coalescent, but has proven necessary in trial runs (results not shown) and comes at little additional cost.
The results of applying these PAC algorithms to  both the individual and joint inference questions posed in Experiments 1 and 2 are summarised in Figures \ref{results_pac} and \ref{results_surf_pac}.

\begin{figure}[!ht]
\centering
\includegraphics[width = \linewidth]{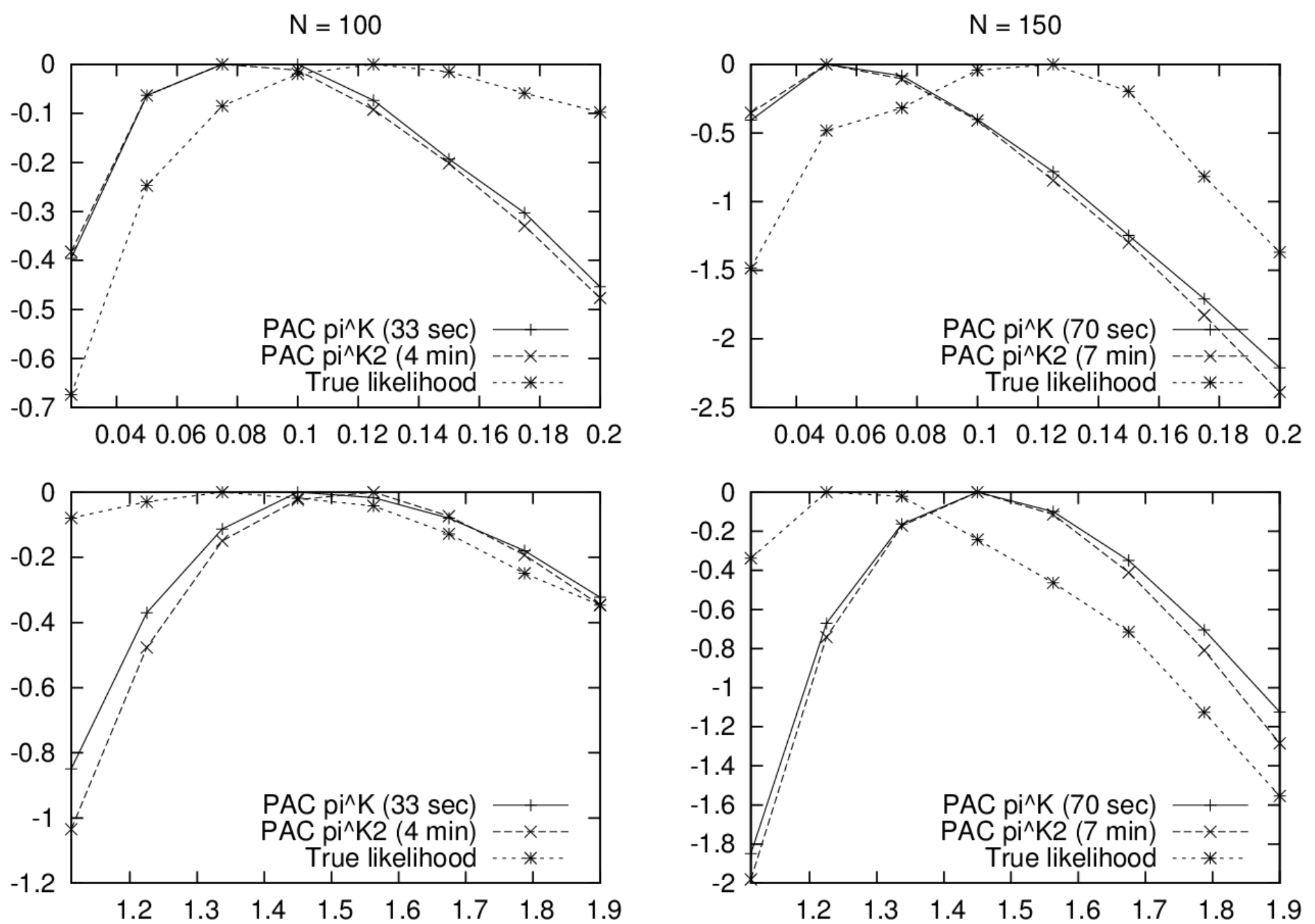}
\caption{The PAC log-likelihood surfaces normalised to 0 following Li and Stephens \cite{Li03}. The true likelihood surfaces in the left column are from Experiment 1, and the true surfaces in the right column are from Experiment 2.}
\label{results_pac}
\end{figure}

\begin{figure}[!ht]
\centering
\includegraphics[width = \linewidth]{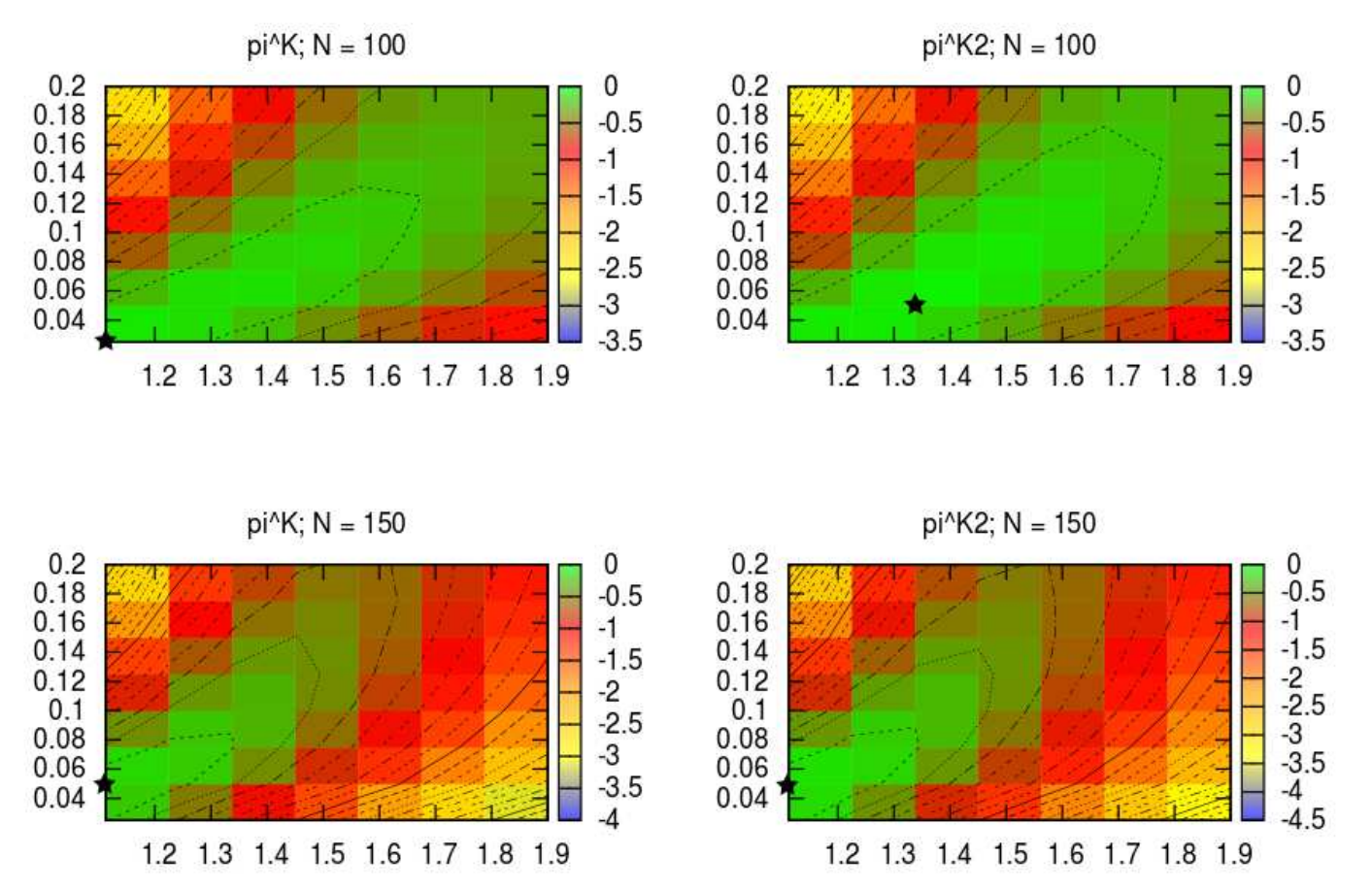}
\caption{The PAC joint log-likelihood surfaces normalised to 0. Locations of MLEs are indicated by stars. Figure \ref{results_surface} provides a suitable IS comparison to the top row.}
\label{results_surf_pac}
\end{figure}

The results of the PAC calculations seem mixed.
Both PAC algorithms are extremely fast, and would likely remain feasible even for reasonably large data sets.
The PAC likelihood estimates are consistently too low by many orgers of magnitude.
Nevertheless, the PAC MLEs in Figure \ref{results_pac} are strikingly close to the true maximisers, particularly for the smaller data set in the left column.
On the other hand, the joint MLEs in Figure \ref{results_surf_pac} are much further from the truth, although the surfaces still broadly capture the diagonal shape seen in Figure \ref{results_surface}.
It is also interesting to note that the two PAC methods perform very similarly in the one-dimensional problems in Figure \ref{results_pac}, but the 2D surface obtained from $\pi^{ \text{K2} }$ is a better fit than that from $\pi^{ \text{K} }$ for the smaller sample. 
For the larger sample the surfaces are nearly identical.

The run times in Figure \ref{results_pac} indicate that the PAC method will remain computationally feasible for substantially larger data sets than the IS algorithm, at least up to tens of thousands of lineages and/or thousands of loci.
Of course, the accuracy of the PAC method to such data sets cannot be concluded from the trials presented here, and careful verification will be necessary on a case-by-case basis.
In further contrast to IS, the runtime of the PAC algorithm is independent of the model parameters, and influenced only weakly by the size of the space of haplotypes.

A substantial amount of work will be required to develop a thorough understanding of the accuracy and pitfalls of these PAC algorithms, and whether or not the more advanced PAC algorithms developed for Kingman's coalescent can be adapted to the $\Lambda$-coalescent setting as well.
Our preliminary simulations motivate this undertaking, and confirm that the PAC method can provide useful, principled and fast results for $\Lambda$-coalescents.

\section{Importance sampling for \texorpdfstring{$\Xi$}{Xi}-coalescents}\label{xi_coalescents}

The important tools in deriving the optimal proposal distributions $\mathbb{ Q }^*( \cdot | \cdot )$ and the approximate CSDs $\hat{ \pi }^{ K }( \cdot | \cdot )$ were, respectively, the lookdown construction of $\cite{Donnelly99}$ and the trunk ancestry of $\cite{Paul10}$.
Both of these are also available for the $\Xi$-coalescent, and in this section we make use of them to extend the IS algorithm to this family of models.

A lookdown construction for the $\Xi$-coalescent and the $\Xi$-Fleming-Viot process was derived by Birkner et al. \cite{Birkner09a} and can be described as follows.
For ease of notation we assume $\Xi( \{ \mathbf{ 0 } \} ) = 0$.
If $\Xi$ does have an atom at zero, its treatment is identical to the $\Lambda$-case.

Let $\Pi_{ \Xi }$ be a Poisson point process on on $\mathbb{ R }_+ \times \Delta \times [ 0,1 ]^{ \mathbb{ N } }$ with rate 
\begin{equation*}
dt \otimes \left( \sum_{ i = 1 }^{ \infty } r_i^2 \right)^{ -1 } \Xi( d\mathbf{ r } ) \otimes dr^{ \otimes \mathbb{ N } }
\end{equation*}
and associate to each lineage a level $\{ 1, \ldots, n \}$.
Define the function
\begin{equation*}
g( \mathbf{ r }, u ) :=
\begin{cases}
\min\left\{ j \in \mathbb{ N } : \sum_{ i = 1 }^j r_i \geq u \right\} &\text{ if } u \leq \sum_{ i = 1 }^{ \infty } r_i \\
\infty &\text{ otherwise }
\end{cases}.
\end{equation*}
At each $( t_j, ( r_{ j 1 }, r_{ j 2 }, \ldots ), ( u_{ j 1 }, u_{ j 2 }, \ldots ) ) \in \Pi_{ \Xi }$ group the $n$ particles such that all particles $l \in \{ 1, \ldots, n \}$ with $g( \mathbf{ r }_j, u_{ j l } ) = k$ form a family for each $k \in \mathbb{  N }$.
Among each family every particle copies the type of the particle with the lowest level.
In addition each particle follows an independent mutation process as for the $\Lambda$-coalescent.

This lookdown construction will be instrumental in establishing the following recursion, which is an explicit version of \eqref{sequential_likelihood} for $\Xi$-coalescents and  a finite sites analogue of the sampling recursion presented in \cite{Moehle06} for the infinite alleles model.
\vskip 11pt
\begin{thm}
The likelihood of type frequencies $\mathbf{ n } \in \mathbb{ N }^{ | \mathcal{ H } | }$ sampled from the stationary $\Xi$-Fleming-Viot process solves
\begin{align}
\mathbb{ P }( \mathbf{ n } ) = &\frac{ 1 }{ g_n + n \theta } \Bigg\{ \sum_{ h : n_h > 0 } \sum_{ l \in L } \theta_l \sum_{ a \in E_l } \left( n_{ S_l^a( h ) } + 1 - \delta_{ a h[ l ] } \right) \mathbb{ P }( \mathbf{ n } - \mathbf{ e }_h + \mathbf{ e }_{ S_l^a( h ) } ) \label{xi_recursion}\\
&+ \sum_{ k_1 = 1 }^{ n_1 }  \ldots \sum_{ k_{ | \mathcal{ H } | } = 1 }^{ n_{ | \mathcal{ H } | } } \sum_{ \pi^1 \in P_{ n_1 }^{ k_1 } } \ldots \sum_{ \pi^{ | \mathcal{ H } | } \in P_{ n_{ | \mathcal{ H } | } }^{ k_{ | \mathcal{ H } | } } } \mathds{ 1 }\left\{ \sum_{ h \in \mathcal{ H } } k_h < n \right\}  \binom{ n }{ | \pi_1^1 |, | \pi_2^1 |, \ldots, | \pi_{ | \mathcal{ H } | }^{ | \mathcal{ H } | } | } \nonumber\\
&\times  \binom{ | \vee_{ h \in \mathcal{ H } }\pi^h | }{ | \pi^1 |, \ldots, | \pi^{ | \mathcal{ H } | } | }^{ -1 } \lambda_{ n; K( \vee_{ h \in \mathcal{ H } } \pi^h ); S( \vee_{ h \in \mathcal{ H } } \pi^h ) } \mathbb{ P }( \mathbf{ k } ) \Bigg\} \nonumber
\end{align}
with the convention that $\sum_{ k = 1 }^0 f( k ) = f( 0 )$.
Here $P_{ n_h }^{ k_h }$ denotes the set of equivalence relations on $n_h \in \mathbb{ N }$ elements with $k_h \leq n_h$ equivalence classes, $\pi^h = ( \pi_1^h \ldots \pi_{ k_h }^h )$ denotes such an equivalence relation so that $\sum_{ i = 1 }^{ k_h } | \pi^h_i | = n_h$ and $\vee_{ h \in \mathcal{ H } } \pi^h$ is the equivalence relation on $n$ elements obtained from appying each $\pi^h$ to the corresponding $n_h$ elements.
The vector $K( \pi )$ lists the sizes of all equivalence classes with more than one member, $S( \pi )$ is the number of classes with exactly one member and $g_n$ is the total coalescence rate of $n$ untyped lineages given by
\begin{equation*}
g_n = \sum_{ a = 1 }^{ n - 1 }\frac{ n! }{ a! } \sum_{ \substack{ b_1, \ldots, b_a \in \mathbb{ N } \\ b_1 + \ldots + b_a = n } } \frac{ \lambda_{ n; K( \mathbf{ b } ); S( \mathbf{ b } ) } }{ b_1 ! \times \ldots \times b_a ! }.
\end{equation*}
\end{thm}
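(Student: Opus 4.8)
The plan is to read \eqref{xi_recursion} off as a one-step decomposition of the genealogical history of the sample, in the spirit of the $\Lambda$-coalescent recursion \eqref{sequential_likelihood}--\eqref{forwards_probs} and of the infinite-alleles $\Xi$-recursion of \cite{Moehle06}. Run the lookdown construction of \cite{Birkner09a} forward to stationarity with the $n$ sampled lineages on levels $\{1,\ldots,n\}$, and condition on the present type counts being $\mathbf{n}$. Mutations strike each lineage at total rate $\theta$, and coalescing $\Xi$-reproduction events among the $n$ levels occur at total rate $g_n$, so the most recent event in the ancestry of the sample is a mutation with probability $n\theta/(g_n+n\theta)$ and a reproduction with probability $g_n/(g_n+n\theta)$; conditional on that event, the configuration immediately before it is again a stationary sample. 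Hence $\mathbb{P}(\mathbf{n})$ equals $(g_n+n\theta)^{-1}$ times the sum, over all possible most recent events, of (rate of the event) times $\mathbb{P}(\text{pre-event counts})$. The duality between the $\Xi$-Fleming--Viot process and the $\Xi$-coalescent makes this rigorous, and it is equivalently the stationarity identity $\mathbb{E}[\mathcal{L}q(\mathbf{n}\mid\mathbf{X})]=0$ for $q(\mathbf{n}\mid\mathbf{x})=\prod_{h}x_h^{n_h}$, obtained by the same argument used in the proof of \eqref{a_csd}. The mutation part is then the first line of \eqref{xi_recursion}, derived exactly as in the mutation case of \eqref{forwards_probs}: the event is a mutation at some locus $l\in L$ that carried a lineage to its present type $h$ from a source allele $a\in E_l$, the pre-event counts are $\mathbf{n}-\mathbf{e}_h+\mathbf{e}_{S_l^a(h)}$, and $n_{S_l^a(h)}+1-\delta_{a h[l]}$ counts the type-$S_l^a(h)$ lineages present in that configuration that are eligible to carry the mutation. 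This is the only ingredient genuinely new relative to \cite{Moehle06}, and it is routine.

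For the coalescence part one exploits the structure of the lookdown: a reproduction event partitions the $n$ present levels into families, within each of which every lineage copies the type of the lowest level, so each family is monochromatic immediately afterwards and, tracing ancestry one step back, a family of type $h$ collapses onto a single ancestral lineage of type $h$ while the remaining members of that family play no further role in the type counts. An admissible most recent reproduction event is therefore specified by choosing, for each haplotype $h$, an equivalence relation $\pi^h\in P_{n_h}^{k_h}$ that groups the $n_h$ present type-$h$ lineages into $k_h$ families (so $1\le k_h\le n_h$, and $k_h=0$ when $n_h=0$, which is the point of the convention $\sum_{k=1}^0 f(k)=f(0)$); the pre-event counts are $\mathbf{k}=(k_1,\ldots,k_{|\mathcal{H}|})$, the overall family partition of the $n$ lineages is $\vee_{h}\pi^h$ with $\sum_h k_h=|\vee_h\pi^h|$ blocks, the event is a genuine coalescence exactly when $\sum_h k_h<n$ (the indicator in \eqref{xi_recursion}), and by the definition of the $\Xi$-rates such an event occurs at rate $\lambda_{n;K(\vee_h\pi^h);S(\vee_h\pi^h)}$, where $K$ lists the non-singleton family sizes and $S$ counts the singleton families.

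The remaining work is the combinatorial bookkeeping, which I expect to be the only real obstacle. Reorganising the jump part of the $\Xi$-Fleming--Viot generator applied to $q(\mathbf{n}\mid\mathbf{x})$ --- equivalently, the sum over set partitions $\xi$ of the level-labelled $n$ lineages weighted by $\lambda_{n;K(\xi);S(\xi)}$ --- according to the per-type partitions $\pi^h$ obtained by restricting $\xi$ to the lineages of each type produces the multinomial coefficient $\binom{n}{|\pi_1^1|,\ldots}$ of \eqref{xi_recursion}, which counts the level assignments consistent with a prescribed profile of family sizes, while the reciprocal multinomial $\binom{\sum_h k_h}{k_1,\ldots,k_{|\mathcal{H}|}}^{-1}$ removes the overcounting of the type labelling of the $\sum_h k_h$ ancestral lineages, since $\mathbb{P}(\mathbf{k})$ is a probability for unordered counts. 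Checking that these are exactly the right factors --- that is, that summing the contributions over $(\pi^1,\ldots,\pi^{|\mathcal{H}|})$ against $\lambda_{n;K;S}$ reproduces the generator's jump operator on $q(\mathbf{n}\mid\mathbf{x})$ --- is the delicate step, and it parallels the analogous count in \cite{Moehle06}. The normaliser $g_n$ arises the same way: it is the total rate of coalescing reproduction events, i.e.\ the sum of $\lambda_{n;K(\xi);S(\xi)}$ over set partitions $\xi$ of $\{1,\ldots,n\}$ with at least one non-singleton block, and organising that sum by the number of blocks $a\in\{1,\ldots,n-1\}$ and then by the ordered block-size vector $(b_1,\ldots,b_a)$ with $b_1+\cdots+b_a=n$, with $n!/(a!\,b_1!\cdots b_a!)$ counting the set partitions of that profile, gives the stated formula. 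The ``no-op'' reproduction events in which every family is a singleton occur at positive rate but leave the sample unchanged; they cancel against the matching terms in $q(\mathbf{n}\mid\mathbf{x})$ exactly as the $k=0$ and $k=1$ terms cancel in the derivation of \eqref{lambda}, which is why only partitions with a non-singleton block enter $g_n$ and why the indicator $\sum_h k_h<n$ is present.
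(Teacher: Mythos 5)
Your proposal follows essentially the same route as the paper: trace the lookdown construction of \cite{Birkner09a} backwards to the most recent event, split into a mutation term and a reproduction term with total rates $n\theta$ and $g_n$, identify an admissible merger with a choice of one equivalence relation $\pi^h$ per haplotype occurring at rate $\lambda_{n;K(\vee_h\pi^h);S(\vee_h\pi^h)}$, and let the indicator $\sum_h k_h<n$ discard the no-op reproduction events exactly as you describe. The one step you explicitly leave undone --- checking that the multinomial prefactors are ``exactly the right factors'' --- is where the paper's organisation helps: rather than attacking the unordered recursion directly (or via the generator identity you mention as an alternative), it first writes the one-step decomposition for the \emph{ordered}, level-labelled sample $p(y_1,\ldots,y_n)$, where no combinatorial coefficients appear at all, and only then passes to type frequencies through $p^0(\mathbf{n})=\binom{n}{n_1,\ldots,n_{|\mathcal{H}|}}\,p(\kappa(\mathbf{n}))$ for a canonical ordered representative $\kappa(\mathbf{n})$. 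Both prefactors in \eqref{xi_recursion} then fall out of elementary multinomial identities such as $\binom{n}{n_1,\ldots,n_{|\mathcal{H}|}}\prod_{h}\binom{n_h}{|\pi_1^h|,\ldots,|\pi_{k_h}^h|}=\binom{n}{|\pi_1^1|,\ldots,|\pi_{|\mathcal{H}|}^{|\mathcal{H}|}|}$ together with $p(\kappa(\mathbf{k}))=\binom{k}{k_1,\ldots,k_{|\mathcal{H}|}}^{-1}p^0(\mathbf{k})$, the latter being precisely your ``removes the overcounting of the type labelling'' factor. So your interpretation of each coefficient is correct, and the order-first-then-symmetrize device (Section 1.4.1 of \cite{Birkner09b}) reduces your flagged obstacle to two lines of algebra; I would call the omission a gap in bookkeeping rather than in the argument.
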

\begin{proof}
The proof is the same as in Section 1.4.1 of \cite{Birkner09b}, adapted here from the $\Lambda$-coalescent to the $\Xi$-coalescent.
Let $p$ denote the distribution of the types of the first $n$ levels of the stationary lookdown construction.
Decomposing according to which event (whether mutation or a merger) occurred first when tracing backwards in time yields
\begin{align}
p( y_1, \ldots, y_n ) = \frac{ 1 }{ g_n + n \theta } \Bigg\{ &\sum_{ i = 1 }^n \sum_{ l \in L } \theta_l \sum_{ a \in E_l } P_{ a h[ y_i ] }^{ ( l ) } p( y_1, \ldots, y_{ i - 1 }, S_l^a( y_i ), y_{ i + 1 }, \ldots, y_n ) \nonumber \\
& + \sum_{ \pi \in P( \mathbf{ y } ) } \lambda_{ n; K( \pi ); S( \pi ) } p( \gamma_{ \pi }( y_1, \ldots, y_n ) ) \Bigg\} \label{xi_lookdown}
\end{align}
where $P( \mathbf{ y } )$ is the set of equivalence relations describing permissible mergers for the sample $\mathbf{ y } = ( y_1, \ldots, y_n )$ (that is, mergers where no equivalence class contains lineages of more than one type) and $\gamma_{ \pi }( y_1, \ldots, y_n )$ is the vector of types which results in $( y_1, \ldots, y_n )$ if the look-down-and-copy event denoted by the equivalence relation $\pi$ takes place.

By exchangeability we are only interested in the vector of type frequencies $\mathbf{ n } = ( n_1, \ldots, n_{ | \mathcal{ H } | } )$.
For such a vector define the canonical representative as 
\begin{equation*}
\kappa( \mathbf{ n } ) := ( \underbrace{ 1, \ldots, 1 }_{ n_1 } , \underbrace{ 2, \ldots, 2 }_{ n_2 }, \ldots, \underbrace{ | \mathcal{ H } |, \ldots, | \mathcal{ H } | }_{ n_{ | \mathcal{ H } | } } ) 
\end{equation*}
and the likelihood as
\begin{equation*}
p^0( \mathbf{ n } ) := \binom{ n }{ n_1, \ldots, n_{ | \mathcal{ H } | } } p( \kappa( \mathbf{ n } ) ).
\end{equation*}
Now we have the following identities
\begin{align*}
n_h \binom{ n }{ n_1, \ldots, n_{ | \mathcal{ H } | } } p( \kappa( \mathbf{ n } - \mathbf{ e }_h + \mathbf{ e }_{ S_l^a( h ) } ) ) =  &( n_{ S_l^a( h ) } + 1 - \delta_{ a h[ l ] } ) p^0( \mathbf{ n } - \mathbf{ e }_h + \mathbf{ e }_{ S_l^a( h ) } ) \\
\binom{ n }{ n_1, \ldots, n_{ | \mathcal{ H } | } } \prod_{ h \in \mathcal{ H } } \binom{ n_h }{ | \pi_1^h |, \ldots, | \pi_{ k_h }^h | } p( \kappa( \mathbf{ k } ) ) = &\binom{ n }{ | \pi_1^1 |, | \pi_2^1 |, \ldots, | \pi_{ | \mathcal{ H } | }^{ | \mathcal{ H } | } | } \binom{ k }{ k_1, \ldots, k_{ | \mathcal{ H } | } }^{ -1 } p^0( \mathbf{ k } )
\end{align*}
which, when substituted into \eqref{xi_lookdown}, yield the desired recursion.
\end{proof}
As in Section \ref{optimal_proposal} we can consider approximating the solution to \eqref{xi_recursion} by importance sampling, and the following theorem is a straightforward extension of Theorem \ref{lambda_optimal_proposal}.
\vskip 11pt
\begin{thm}\label{xi_optimal_proposal}
The optimal proposal distributions for recursion \eqref{xi_recursion}, denoted $\mathbb{ Q }^*_{ \Xi }$, are
\begin{equation*}
\mathbb{ Q }_{ \Xi }^*( H_i | H_{ i + 1 } ) \propto \begin{cases}
n_h \theta_l \frac{ \pi( \mathbf{ e }_{ S_l^a( h ) } | H_{ i + 1 } - \mathbf{ e }_h ) }{ \pi( \mathbf{ e }_h | H_{ i + 1 } - \mathbf{ e }_h ) } P^{ ( l ) }_{ a h[ l ] } \text{ if } H_i = H_{ i + 1 } - \mathbf{ e }_h + \mathbf{ e }_{ S_l^a( h ) } \\
\displaystyle \sum_{ \pi^1 \in P_{ n_1 }^{ k_1 } } \ldots \sum_{ \pi^{ | \mathcal{ H } | } \in P_{ n_{ | \mathcal{ H } | } }^{ k_{ | \mathcal{ H } | } } } \prod_{ h \in \mathcal{ H } } \binom{ n_h }{ | \pi_1^h |, \ldots, | \pi_{ k_h }^h | } \frac{ \lambda_{ n; K( \vee_{ h \in \mathcal{ H } } \pi^h ); S(  \vee_{ h \in \mathcal{ H } } \pi^h ) } }{ \pi( \mathbf{ n } - \mathbf{ k } | \mathbf{ k } ) } \\ \text{if } H_{ i + 1 } = \mathbf{ n } \text{ and } H_i = \mathbf{ k } \text{ for } k_h = \{ 1, \ldots, n_h \} \text{ and } \sum_{ h \in \mathcal{ H } } k_h < n
\end{cases}
\end{equation*}
where $n$ and $n_h$ denote type frequencies of $H_{ i + 1 }$.
\end{thm}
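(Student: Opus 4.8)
The plan is to recognise the optimal proposal as the one-step time reversal of the forward chain and then to read it off directly from the likelihood recursion \eqref{xi_recursion}. Substituting $\mathbb{Q}(H_i\mid H_{i+1})=\mathbb{P}(H_{i+1}\mid H_i)\mathbb{P}(H_i)/\mathbb{P}(H_{i+1})$ into the weight appearing in \eqref{backwards_proposal} makes the product telescope to the constant $\mathbb{P}(H_0)$, so this is the variance-minimising choice, with normalising constant $\mathbb{P}(H_{i+1})=\sum_{H_i}\mathbb{P}(H_{i+1}\mid H_i)\mathbb{P}(H_i)$. This last sum is exactly the right-hand side of \eqref{xi_recursion}, so $\mathbb{Q}^*_{\Xi}(H_i\mid H_{i+1})$ is proportional to the corresponding summand of \eqref{xi_recursion}, divided by $\mathbb{P}(\mathbf{n})$. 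It therefore only remains to rewrite each summand in terms of the conditional sampling distributions $\pi(\,\cdot\mid\cdot\,)$, using the defining relation $\pi(\mathbf{m}\mid\mathbf{n}')=\mathbb{P}_{\mathrm{ord}}(\mathbf{n}'+\mathbf{m})/\mathbb{P}_{\mathrm{ord}}(\mathbf{n}')$ together with the identity $\mathbb{P}(\mathbf{n}')=\binom{n'}{n'_1,\ldots,n'_{|\mathcal{H}|}}\mathbb{P}_{\mathrm{ord}}(\mathbf{n}')$ relating the likelihood of type frequencies to the probability of a fixed ordered sample.

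For a mutation predecessor this is the computation already used in Theorem \ref{lambda_optimal_proposal}: one finds $\pi(\mathbf{e}_{S_l^a(h)}\mid\mathbf{n}-\mathbf{e}_h)/\pi(\mathbf{e}_h\mid\mathbf{n}-\mathbf{e}_h)=\tfrac{n_{S_l^a(h)}+1-\delta_{ah[l]}}{n_h}\,\mathbb{P}(\mathbf{n}-\mathbf{e}_h+\mathbf{e}_{S_l^a(h)})/\mathbb{P}(\mathbf{n})$ after carrying out the ordered-to-unordered correction, so multiplying by $n_h\theta_l P^{(l)}_{ah[l]}$ reproduces the mutation summand of \eqref{xi_recursion} up to the common factor $\mathbb{P}(\mathbf{n})^{-1}$. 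Alternatively one can run the lookdown argument of Theorem \ref{lambda_optimal_proposal} verbatim, now using the $\Xi$-lookdown construction of \cite{Birkner09a}. Either way I would simply cite the earlier argument and omit the details.

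For a coalescence predecessor $\mathbf{k}$ with $1\le k_h\le n_h$ and $\sum_{h\in\mathcal{H}}k_h<n$, the summands of \eqref{xi_recursion} that lead to $\mathbf{k}$ are precisely those indexed by tuples $(\pi^h)_{h\in\mathcal{H}}$ with $\pi^h\in P_{n_h}^{k_h}$, and I would combine two elementary identities. First, since $|\vee_h\pi^h|=\sum_h k_h=k$ and $|\pi^h|=k_h$, the block-counting identity $\binom{n}{|\pi_1^1|,\ldots,|\pi_{k_{|\mathcal{H}|}}^{|\mathcal{H}|}|}=\binom{n}{n_1,\ldots,n_{|\mathcal{H}|}}\prod_{h\in\mathcal{H}}\binom{n_h}{|\pi_1^h|,\ldots,|\pi_{k_h}^h|}$ together with $\binom{|\vee_h\pi^h|}{|\pi^1|,\ldots,|\pi^{|\mathcal{H}|}|}=\binom{k}{k_1,\ldots,k_{|\mathcal{H}|}}$. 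Second, the defining relation gives $\pi(\mathbf{n}-\mathbf{k}\mid\mathbf{k})=\binom{n}{n_1,\ldots,n_{|\mathcal{H}|}}^{-1}\binom{k}{k_1,\ldots,k_{|\mathcal{H}|}}\,\mathbb{P}(\mathbf{n})/\mathbb{P}(\mathbf{k})$. Substituting both into the coalescence summand of \eqref{xi_recursion} turns $\binom{n}{|\pi_1^1|,\ldots}\binom{k}{k_1,\ldots}^{-1}\lambda_{n;K(\vee_h\pi^h);S(\vee_h\pi^h)}\,\mathbb{P}(\mathbf{k})/\mathbb{P}(\mathbf{n})$ into $\prod_{h\in\mathcal{H}}\binom{n_h}{|\pi_1^h|,\ldots,|\pi_{k_h}^h|}\,\lambda_{n;K(\vee_h\pi^h);S(\vee_h\pi^h)}/\pi(\mathbf{n}-\mathbf{k}\mid\mathbf{k})$, and summing over the $\pi^h$ yields the second case of the stated formula. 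The main obstacle is exactly this combinatorial bookkeeping: it is routine, but must be carried out with care, and one should keep in mind the difference from the $\Lambda$-case flagged in Remark \ref{univariate_remark} --- because a single $\Xi$-merger can involve lineages of several haplotypes simultaneously, $\pi(\mathbf{n}-\mathbf{k}\mid\mathbf{k})$ is genuinely multivariate and does not factorise into univariate CSDs.
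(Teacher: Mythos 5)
Your proof is correct, but it takes a genuinely different route from the paper's. The paper disposes of this theorem in a single sentence, declaring the argument identical to that of Theorem \ref{lambda_optimal_proposal}: a direct Bayes computation on the (now $\Xi$-)lookdown particle system, conditioning on the ordered configuration $D_n(t)$ and summing over the possible pre-merger types of the absorbed lineages, so that ratios of ordered sampling probabilities appear immediately as CSDs and the combinatorial prefactor is supplied by exchangeability. You instead identify the optimal proposal as the time reversal of the forward jump chain and read the reverse transition probabilities off the unordered likelihood recursion \eqref{xi_recursion}, converting each summand into CSD form via the nested multinomial identity and the ordered-to-unordered correction $p^0( \mathbf{ n } ) = \binom{ n }{ n_1, \ldots, n_{ | \mathcal{ H } | } } p( \kappa( \mathbf{ n } ) )$. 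Both of your identities check out --- they are precisely the two identities used in the paper's own derivation of \eqref{xi_recursion}, run in reverse --- and your closing caveat that $\pi( \mathbf{ n } - \mathbf{ k } \mid \mathbf{ k } )$ is genuinely multivariate and does not factorise as in Remark \ref{univariate_remark} is the right one to flag. What your route buys is an explicit consistency check: the proposal weights are visibly the summands of \eqref{xi_recursion} divided by $\mathbb{ P }( \mathbf{ n } )$, so the zero-variance property and the unbiasedness of any approximating scheme are immediate. What the paper's route buys is economy: the lookdown argument transfers verbatim from the $\Lambda$-case with only the merger rates and combinatorial coefficients changed, and no multinomial bookkeeping is needed because the CSDs are defined through ordered samples in the first place.
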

\begin{proof}
The argument is identical to the proof of Theorem \ref{lambda_optimal_proposal} taking into account the larger class of permitted simultaneous multiple mergers and hence different combinatorial coefficients.
\end{proof}

As before the CSDs used in the statement of Theorem \ref{xi_optimal_proposal} are not available, but any approximation to them will yield an unbiased algorithm and better approximations can be expected to correspond to more efficient algorithms.
The generator of the $\Xi$-Fleming-Viot process is not as immediately tractable as its Fleming-Viot and $\Lambda$-Fleming-Viot counterparts, so we abandon the generator-based approach of De Iorio and Griffiths and derive approximate CSDs from the trunk ancestry $\mathcal{ A }^*( \mathbf{ n } )$.
\vskip 11pt
\begin{defn}
Let $\hat{ \pi }^K_{ \Xi }( \mathbf{ e }_h | \mathbf{ n } )$ be the CSD obtained by letting the $( n + 1 )^{ \text{th} }$ lineage mutate with rates $\{ \theta_l \}_{ l \in L }$ via transition matrices $\{ P^{ ( l ) } \}_{ l \in L }$, be absorbed into $\mathcal{ A }^*( \mathbf{ n } )$ with rate
\begin{equation*}
\frac{ 1 }{ n + 1 } \sum_{ k = 1 }^n \sum_{ \pi \in P_{ n + 1 }^k } \binom{ n + 1 }{ | \pi_1 |, \ldots, | \pi_k | } \lambda_{ n + 1; K( \pi ); S( \pi ) },
\end{equation*}
and choose its parent uniformly upon absorption.
\vskip 11pt
\end{defn}
\begin{prop}
The approximate CSDs $\hat{ \pi }_{ \Xi }^K( \mathbf{ e }_h | \mathbf{ n } )$ solve the following recursion:
\begin{align*}
&\left[ \theta + \frac{ 1 }{ n + 1 }\sum_{ k = 1 }^n \sum_{ \pi \in P_{ n + 1 }^k } \binom{ n + 1 }{ | \pi_1 |, \ldots, | \pi_k | } \lambda_{ n + 1; K( \pi ); S( \pi ) } \right] \hat{ \pi }_{ \Xi }^K( \mathbf{ e }_h | \mathbf{ n } ) \\
&= \frac{ n_h }{ n ( n + 1 ) }\sum_{ k = 1 }^n \sum_{ \pi \in P_{ n + 1 }^k } \binom{ n + 1 }{ | \pi_1 |, \ldots, | \pi_k | } \lambda_{ n + 1; K( \pi ); S( \pi ) } + \sum_{ l \in L }\theta_l \sum_{ a \in E_l }P_{ a h[ l ] }^{ ( l ) } \hat{ \pi }_{ \Xi }^K( \mathbf{ e }_{ S_l^a( h ) } | \mathbf{ n } )
\end{align*}
and is the stationary distribution of the Markov Chain on $\mathcal{ H }$ with transition probability matrix
\begin{equation*}
\frac{ P + \left\{ \frac{ 1 }{ n ( n + 1 ) }\sum_{ k = 1 }^n \sum_{ \pi \in P_{ n + 1 }^k } \binom{ n + 1 }{ | \pi_1 |, \ldots, | \pi_k | }  \lambda_{ n + 1; K( \pi ); S( \pi ) } \right\} N }{ \theta + \frac{ 1 }{ n + 1 } \sum_{ k = 1 }^n \sum_{ \pi \in P_{ n + 1 }^k } \binom{ n + 1 }{ | \pi_1 |, \ldots, | \pi_k | }  \lambda_{ n + 1; K( \pi ); S( \pi ) } }.
\end{equation*}
where $P$ and $N$ are as in Proposition \ref{lambda_acsd}.
\end{prop}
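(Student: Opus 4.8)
The plan is to mirror the (one-line) proof of Proposition~\ref{lambda_acsd}: first unfold the definition of $\hat{\pi}_{\Xi}^K(\mathbf{e}_h\mid\mathbf{n})$ by tracing the $(n+1)^{\text{th}}$ lineage backwards in time and conditioning on its most recent event, and then rewrite the resulting linear system in matrix form to recognise it as a stationarity equation.

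For the first step, note that the $(n+1)^{\text{th}}$ lineage runs two competing exponential clocks: mutation at locus $l$ at rate $\theta_l$, and absorption into $\mathcal{A}^*(\mathbf{n})$ at the rate $A_{\Xi}:=\frac{1}{n+1}\sum_{k=1}^n\sum_{\pi\in P_{n+1}^k}\binom{n+1}{|\pi_1|,\ldots,|\pi_k|}\lambda_{n+1;K(\pi);S(\pi)}$ prescribed in the definition, so the total exit rate is $\theta+A_{\Xi}$. Decomposing on which clock rings first (going backwards), with probability proportional to $\theta_l P^{(l)}_{ah[l]}$ the event is a mutation at locus $l$ whose older type is $S_l^a(h)$, contributing $\hat{\pi}_{\Xi}^K(\mathbf{e}_{S_l^a(h)}\mid\mathbf{n})$; with probability proportional to $A_{\Xi}$ the event is absorption, in which case the lineage copies the type of a uniformly chosen trunk lineage, which is $h$ with probability $n_h/n$ by the exchangeability of trunk types across levels in the stationary lookdown construction of \cite{Birkner09a}. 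Adding these contributions, dividing by $\theta+A_{\Xi}$ and then clearing the denominator yields exactly the displayed recursion, using $A_{\Xi}\,n_h/n=\frac{n_h}{n(n+1)}\sum_{k=1}^n\sum_{\pi\in P_{n+1}^k}\binom{n+1}{|\pi_1|,\ldots,|\pi_k|}\lambda_{n+1;K(\pi);S(\pi)}$.

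For the Markov chain claim, regard $\hat{\pi}_{\Xi}^K(\cdot\mid\mathbf{n})$ as a row vector $v$ over $\mathcal{H}$. The mutation sum on the right-hand side is $(vP)_h$, with $P$ the $\theta_l$-weighted mixture of the $P^{(l)}$ as in Proposition~\ref{lambda_acsd} (so that the pair $(l,a)$ with $S_l^a(h)=g$ contributes $\theta_l P^{(l)}_{g[l]h[l]}$ to the $(g,h)$ entry, and each row of $P$ sums to $\theta$), and $n_h=(vN)_h$ because $v$ sums to $1$ and every row of $N$ equals $(n_1,\ldots,n_{|\mathcal{H}|})$. Hence the recursion reads $(\theta+A_{\Xi})\,v=v\bigl(P+\tfrac{A_{\Xi}}{n}N\bigr)$, i.e.\ $v=vM$ with $M$ the matrix in the statement. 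That matrix is stochastic---its entries are nonnegative and each row sums to $(\theta+A_{\Xi})/(\theta+A_{\Xi})=1$---and $v$ is a probability vector, so $v$ is a stationary distribution of $M$ (and the unique one when the mutation chain is irreducible).

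I do not anticipate a genuine obstacle: the content is the definitional unfolding plus a change of variables, exactly as in the $\Lambda$-coalescent case. The only points requiring care are the identification of the absorption outcome probability as $n_h/n$, which rests on the exchangeability of trunk types across levels in the stationary $\Xi$-lookdown construction, and the book-keeping with the multi-index notation $P_{n+1}^k$, $K(\pi)$, $S(\pi)$ needed to make the combinatorial coefficients in $A_{\Xi}$ agree with the $\Xi$-coalescent merger rates.
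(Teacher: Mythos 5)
Your proposal is correct and follows exactly the paper's route: the paper's proof is the one-line remark that the recursion follows by decomposing on the first event backwards in time (as in Proposition~\ref{lambda_acsd}), with the transition matrix then read off from the resulting linear system, which is precisely what you carry out in detail. The extra checks you supply (the $n_h/n$ absorption probability from uniform parent choice, and the row-stochasticity of the matrix) are the right ones and are consistent with the paper's definitions.
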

\begin{proof}
The proof is identical to Proposition \ref{lambda_acsd} and follows by considering the first event backwards in time encountered by the lineage.
\end{proof}
Note that because simultaneous multiple mergers can take place, the decomposition in Remark \ref{univariate_remark} is no longer valid and multivariate approximate CSDs $\hat{ \pi }_{ \Xi }^K( \mathbf{ m } | \mathbf{ n } )$ must also be specified.
This is most naturally done by averaging over all permutations of the lineages in $\mathbf{ m }$, but this is computationally infeasible for all but very small samples $\mathbf{ m }$.
The PAC approach of averaging over a relatively small number of random permutations can be used to yield a more practical family, although algorithms will still be limited by the fact that evaluating the CSDs requires computing all equivalence classes on $n$ elements.
This burden can be alleviated considerably by assuming that the measure $\Xi$ places full mass on a finite dimensional simplex, which amounts to restricting the number of permitted simultaneous mergers to the same, finite number.
If this number is small compared to the size of the data set, far fewer terms will need to be computed at each stage of the algorithm but the model still allows for more general ancestral trees than any $\Lambda$-coalescent.
In particular, the case of up to four simultaneous mergers arising in coalescent models of diploid populations \cite{Moehle03}, \cite{Birkner13} seems computationally feasible.

\section{Discussion}\label{discussion}

In this paper we have developed novel IS algorithms for inference under the $\Lambda$- and $\Xi$-coalescent models, which retain the rigorous motivations of proposals that have been designed for Kingman's coalescent \cite{DeIorio04a}, \cite{DeIorio04b}, \cite{Paul10}. 
Furthermore, they outperform existing algorithms for $\Lambda$-coalescent inference, and like all IS methods are unbiased. 
It should be noted however that the greater modelling flexibility provided by $\Lambda$- and $\Xi$-coalescents comes with additional computational cost in comparison to the more restrictive Kingman’s coalescent. 
The inference problems considered in this paper have consisted of small samples of chromosomes comprised of a small number of loci, each with a simple mutation model. 
While some cost is certainly unavoidable, these computations can be sped up considerably by reducing the number of independent simulations through making use of driving values \cite{Griffiths94c}  or bridge sampling \cite{Meng96}.
It is also noteworthy that, as with IS algorithms in general, all of the algorithms used here can be parallelised very effectively.

The limits on data sets which can be feasibly analysed using IS are restrictive even under Kingmans coalescent, so alternate methods have been developed to tackle broader classes of problems.
The PAC method is a prime example, and our simulations suggest it is also a viable approach for $\Lambda$-coalescents.
Much work has been done on sophisticated approximations to CSDs for Kingman's coalescent with recombination and other features, and our results in Section \ref{exp3} indicate that investigating similar approaches under $\Lambda$- and $\Xi$-coalescents is a fruitful direction for future research.
Many of the generalisations of interest result in processes with generators that vary from those studied in this paper only by additive terms, so we expect that the machinery used here can be applied more generally with little added difficulty.

\section*{Acknowledgements}

Jere Koskela is a member of the MASDOC doctoral training centre at the University of Warwick, which is funded by Engineering and Physical Sciences Research Council grant EP/HO23364/1.

%
%
%
%

\bibliographystyle{alpha}
\bibliography{science}  

%
%
%
%

\end{document}